\theoremstyle{plain}
\newtheorem{theo}{Theorem}[section]
\newtheorem{prop}[theo]{Proposition}
\theoremstyle{definition}
\theoremstyle{remark}
\newtheorem{rem}[theo]{Remark}
\numberwithin{equation}{section}
\newcommand{\R}{\mathbb{R}}
\title{Quantitative estimates of strong unique continuation for anisotropic wave equations 
}
\author {Sergio Vessella\thanks{Universit\`a degli Studi di Firenze, Italy, E-mail:
\textsf{sergio.vessella@unifi.it}}}
\date{}
\begin{document}

\setcounter{section}{0}
\setcounter{secnumdepth}{2}

\maketitle

\begin{abstract}
The main results of the present paper consist in some quantitative estimates for solutions to the wave equation $\partial^2_{t}u-\mbox{div}\left(A(x)\nabla_x u\right)=0$.

Such estimates imply the following strong unique continuation properties:  (a) if $u$ is a solution to the the wave equation and $u$ is flat on a segment $\{x_0\}\times J$ on the $t$ axis, then $u$ vanishes in a neighborhood of $\{x_0\}\times J$. (b) Let u be a solution of the above wave equation in $\Omega\times J$ that vanishes on a a portion $Z\times J$ where $Z$ is a portion of $\partial\Omega$ and $u$ is flat on a segment $\{x_0\}\times J$, $x_0\in Z$, then $u$ vanishes in a neighborhood of $\{x_0\}\times J$. The property (a) has been proved by G. Lebeau, Comm. Part. Diff. Equat. 24 (1999), 777-783.
\medskip

\noindent\textbf{Mathematics Subject Classification (2010)}
Primary 35R25, 35L; Secondary 35B60 ,35R30.

\medskip

\noindent \textbf{Keywords}
Stability Estimates, Unique Continuation Property, Hyperbolic Equations, Inverse Problems.
\end{abstract}
\section{Introduction} \label{introduction}

In this paper, we prove some quantitative estimates of strong unique
continuation for solutions to the wave equation
\begin{equation} \label{1-1-1}
\partial^2_{t}u-\mbox{div}\left(A(x)\nabla_x u\right)=0,
\end{equation}
($\mbox{div}:=\sum_{j=1}^n\partial_{x_j}$) where $A(x)$ is a real-valued symmetric $n\times n$, $n\geq 2$, matrix whose entries are functions of Lipschitz class and that satisfies the condition of uniform ellipticity. These estimates represent the quantitative counterparts of the following strong unique continuation property for equation \eqref{1-1-1}. Let $u$ be a weak solution to \eqref{1-1-1} and assume that

\begin{equation*}
\sup_{t\in J}\left\Vert u(\cdot,t) \right\Vert_{L^2\left(B_{r}\right)}=O(r^N)\mbox{, } \forall N\in\mathbb{N} \mbox{, as } r\rightarrow 0,
\end{equation*}
 where $J=(-T,T)$ is an interval of $\mathbb{R}$. Then we have
 \[u= 0 \quad \hbox{in } \mathcal{U},\]
 where $\mathcal{U}$ is a neighborhood of $\{0\}\times J$.
The above property of strong unique continuation was proved by Lebeau in \cite{Le}. Previously such a property was proved by Masuda \cite{Ma} whenever $J=\mathbb{R}$ and the entries of the matrix $A$ are functions of $C^2$ class and by Baouendi-Zachmanoglou \cite{Ba-Za} whenever the entries of $A$ are analytic functions. In both \cite{Ma} and \cite{Ba-Za}, the above property was proved also for first order perturbation of operator $\partial^2_{t}u-\mbox{div}\left(A(x)\nabla u\right)$. Also, we recall here the papers \cite{CheDY}, \cite{CheYZ} and \cite{Ra}. In such papers are proved unique continuation properties along and across lower dimensional manifolds for the wave equation.

Roughly speaking, the quantitative estimate of strong unique continuation (at the interior) that we prove is the following one (for the precise statement see Theorem \ref{5-115}). Let $u$ be a solution to \eqref{1-1-1} in the cylinder of $B_1\times J$, where, for any $R>0$, $B_R$ is the ball of $\mathbb{R}^n$, $n\geq 2$, centered at 0 with radius $R$ and let $r\in (0,1)$. Assume that

\begin{equation*}
\sup_{t\in J}\left\Vert u(\cdot,t) \right\Vert_{L^2\left(B_{r}\right)}\leq \varepsilon \quad\mbox{  and }\quad \left\Vert u(\cdot,0) \right\Vert_{H^2\left(B_{1}\right)}\leq 1,
\end{equation*}
where $\varepsilon<1$ then

\begin{gather}
\label{SUCP-introduction}
\left\Vert u(\cdot,0) \right\Vert_{L^2\left(B_{s_0}\right)} \leq C\left\vert\log \left(\varepsilon^{\theta}\right) \right\vert^{-1/6},
\end{gather}
where $s_0\in (0,1)$, $C\geq 1$ are constants independent by $u$ and $r$ and
\begin{equation}
\label{theta-introduction}
\theta= |\log r|^{-1}.
\end{equation}
For the well-known counterexample of John for the wave equation, \cite{J}, the logarithmic character of the estimate is not surprising,
but the novelty of estimate \eqref{SUCP-introduction} is the sharp dependence of the exponent $\theta$ on $r$. Indeed it is simple to check (see Remark \ref{SUCPrem}) that estimate \eqref{SUCP-introduction} implies the strong unique continuation property.

As a consequence of the above estimate and some reflection transformation introduced in \cite{AE}  we derive a quantitative estimate of unique continuation at the boundary (Theorem \ref{5-115Boundary}).

The proof of the above quantitative estimates of unique continuation is carried out exploiting the same ingredients used in \cite{Le} which are the Boman transformation, \cite{Bo}, and the application of Carleman estimate with singular weight, \cite{AKS}, \cite{hormanderpaper1}, \cite{EsVe} to the elliptic operator $\partial^2_{y}+\mbox{div}\left(A(x)\nabla_x\right)$. Also, see Section \ref{Concluding}, by using simple tricks we extend the quantitative estimate of strong unique continuation to the equations  $$q(x)\partial^2_{t}u-\mbox{div}\left(A(x)\nabla_x u\right)-b(x)\cdot\nabla_x u-c(x)u=0,$$ where $q$ is a positive function of Lipschitz class, $b$ is vector-valued function of Lipschitz class and $c$ is a bounded measurable function.

Now, it is worth while to remind that strong properties of unique continuation and the related quantitative estimates have been well understood for second order equation of elliptic (\cite{AE}, \cite{AKS}, \cite{hormanderpaper1}, \cite{KoTa1}) and parabolic type (\cite{A-V}, \cite{EsFe}, \cite{KoTa2}). The three sphere inequalities \cite{La}, doubling inequalities \cite{GaLi}, or two-sphere one cylinder inequality \cite{EsFeVe} are the typical form in which such quantitative estimates of unique continuation occur in the elliptic or in the parabolic context. For a more extensive literature on this subject, we refer to \cite{A-R-R-V} and \cite{Ve} for elliptic and parabolic equations respectively.

The main purpose that has led us to gain the estimates of the present paper is their applications in the stability issue for inverse hyperbolic problems with time independent unknown boundaries from transient data with a finite time of observation. For such problems some uniqueness results has been proved in \cite{Is2}. However, in contrast to the analogues problems for second order elliptic and parabolic equations, the stability issue in the hyperbolic context is much less studied. In a forthcoming paper, where we cover part of this lack, among the main tools that we use to prove sharp stability estimates there are precisely the quantitative estimate of unique continuation proved in the present paper. The quantitative estimate of strong unique continuation was applied for the first time to the elliptic inverse problems with unknown boundaries in \cite{A-B-R-V}. Concerning the parabolic inverse problems with unknown boundaries such estimates was applied in \cite{CRoVe1}, \cite{CRoVe2}, \cite{DcRVe}, \cite{Ve}. In both the cases, elliptic and parabolic, the stability estimates that was proved are optimal \cite{Dc-R} and \cite{Al} (elliptic case),  \cite{DcRVe} (parabolic case).

The plan of the paper is as follows. In Section \ref{MainRe} we state the main results of this paper, in Section \ref{SUCP_Estimates} we prove the theorems of Section \ref{MainRe}, in Section \ref{Concluding} we consider the case of the more general equation $q(x)\partial^2_{t}u-\mbox{div}\left(A(x)\nabla_x u\right)-b(x)\cdot\nabla_x u-c(x)u=0$.

\section{The main results}\label{MainRe}
\subsection{Notation and Definition} \label{sec:notation}
Let $n\in\mathbb{N}$, $n\geq 2$. For any $x\in \R^n$, we will denote $x=(x',x_n)$, where $x'=(x_1,\ldots,x_{n-1})\in\R^{n-1}$, $x_n\in\R$ and $|x|=\left(\sum_{j=1}^nx_j^2\right)^{1/2}$.
Given $x\in \R^n$, $r>0$, we will denote by $B_r$, $B'_r$ $\widetilde{B}_r$ the ball of $\mathbb{R}^{n}$, $\mathbb{R}^{n-1}$ and $\mathbb{R}^{n+1}$ of radius $r$ centered at 0 respectively.
For any open set $\Omega\subset\mathbb{R}^n$ and any function (smooth enough) $u$  we denote by $\nabla_x u=(\partial_{x_1}u,\cdots, \partial_{x_n})$ the gradient of $u$. Also, for the gradient of $u$ we use the notation $D_x$. If $j=0,1,2$ we denote by $D^j_x u$ the set of the derivatives of $u$ of order $j$, so $D^0_x u=u$, $D^1_x u=\nabla_x u$ and $D^2_x$ is the hessian matrix $\{\partial_{x_ix_j}u\}_{i,j=1}^n$. Similar notation are used whenever other variables occur and $\Omega$ is an open subset of $\mathbb{R}^{n-1}$ or a subset $\mathbb{R}^{n+1}$. By $H^{\ell}(\Omega)$, $\ell=0,1,2$ we denote the usual Sobolev spaces of order $\ell$, in particular we have $H^0(\Omega)=L^2(\Omega)$.

For any interval $J\subset \mathbb{R}$ and $\Omega$ as above we denote by
 \[\mathcal{W}\left(J;\Omega\right)=\left\{u\in C^0\left(J;H^2\left(\Omega\right)\right): \partial_t^\ell u\in C^0\left(J;H^{2-\ell}\left(\Omega\right)\right), \ell=1,2\right\}.\]

We shall use the letters $C,C_0,C_1,\cdots$ to denote constants. The value of the constants may change from line to line, but we shall specified their dependence everywhere they appear.

\subsection{Statements of the main results}\label{QEsucp}
Let $A(x)=\left\{a^{ij}(x)\right\}^n_{i,j=1}$ be a real-valued symmetric $n\times n$ matrix whose entries are measurable functions and they satisfy the following conditions for given constants $\rho_0>0$, $\lambda\in(0,1]$ and $\Lambda>0$,
\begin{subequations}
\label{1-65}
\begin{equation}
\label{1-65a}
\lambda\left\vert\xi\right\vert^2\leq A(x)\xi\cdot\xi\leq\lambda^{-1}\left\vert\xi\right\vert^2, \quad \hbox{for every } x, \xi\in\mathbb{R}^n,
\end{equation}
\begin{equation}
\label{2-65}
\left\vert A(x)-A(y)\right\vert\leq\frac{\Lambda}{\rho_0} \left\vert x-y \right\vert, \quad \hbox{for every } x, y\in\mathbb{R}^n.
\end{equation}
\end{subequations}

Let $q=q(x)$ be a a real-valued measurable function that satisfies
\begin{subequations}
\label{3-65}
\begin{equation} \label{3-65a}
\lambda\leq q(x)\leq\lambda^{-1}, \quad \hbox{for every } x\in\mathbb{R}^n,
\end{equation}
\begin{equation} \label{3'-65}
\left\vert q(x)-q(y)\right\vert\leq\frac{\Lambda}{\rho_0} \left\vert x-y \right\vert, \quad \hbox{for every } x, y\in\mathbb{R}^n.
\end{equation}
\end{subequations}

Let $u\in\mathcal{W}\left([-\lambda\rho_0,\lambda\rho_0];B_{\rho_0}\right)$ be a weak solution to

\begin{equation} \label{4i-65}
q(x)\partial^2_{t}u-\mbox{div}\left(A(x)\nabla_x u\right)=0, \quad \hbox{in } B_{\rho_0}\times(-\lambda\rho_0,\lambda\rho_0).
\end{equation}
Let $r_0\in (0,\rho_0]$ and denote by

\begin{equation} \label{4ii-65}
\varepsilon:=\sup_{t\in (-\lambda\rho_0,\lambda\rho_0)}\left(\rho_0^{-n}\int_{B_{r_0}}u^2(x,t)dx\right)^{1/2}
\end{equation}
and

\begin{equation} \label{4iii-65}
H:=\left(\sum_{j=0}^2\rho_0^{j-n}\int_{B_{\rho_0}}\left\vert D_x^ju(x,0)\right\vert^2 dx\right)^{1/2}.
\end{equation}

\begin{theo}[\textbf{estimate at the interior}]\label{5-115}
Let $u\in\mathcal{W}\left([-\lambda\rho_0,\lambda\rho_0];B_{\rho_0}\right)$ be a weak solution to \eqref{4i-65} and let \eqref{1-65} and \eqref{3-65} be satisfied. Then there exist constants $s_0\in (0,1)$ and $C\geq 1$ depending on $\lambda$ and $\Lambda$ only such that for every $0<r_0\leq \rho\leq s_0 \rho_0$ the following inequality holds true

\begin{gather}
\label{SUCP}
\left\Vert u(\cdot,0) \right\Vert_{L^2\left(B_{\rho}\right)} \leq \frac{C\left(\rho_0\rho^{-1}\right)^{C}(H+e\varepsilon)}{\left(\theta\log \left( \frac{H+e\varepsilon}{\varepsilon}\right) \right)^{1/6}},
\end{gather}
where
\begin{equation}
\label{theta}
\theta=\frac{\log (\rho_0/C\rho)}{\log (\rho_0/r_0)}.
\end{equation}
\end{theo}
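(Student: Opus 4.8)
The plan is to follow Lebeau's strategy of converting the hyperbolic problem into an elliptic one by adjoining a new variable and applying a Boman-type transformation, and then to feed the resulting elliptic function into a quantitative three-sphere/propagation-of-smallness argument based on a Carleman estimate with singular weight. First I would introduce a new real variable $y$ and, via the Boman transformation, associate to the solution $u$ of \eqref{4i-65} a function $w(x,y)$ (a suitable integral superposition of $u(x,t)$ against an analytic kernel in $(t,y)$) which solves the elliptic equation $\partial^2_y w + q(x)^{-1}\,\mathrm{div}(A(x)\nabla_x w)=0$ (or the version with $q\equiv 1$ after an obvious reduction) in a neighborhood of $\{0\}\times\{0\}$ in $\mathbb{R}^{n+1}$. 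The point of this step is that the smallness hypothesis $\varepsilon$ on $u$ over $B_{r_0}$, together with the $H$-bound at $t=0$, transfers to quantitative bounds on $w$ and its derivatives: $w$ is small (in $L^2$, and then in $H^1$ or $H^2$ by interior elliptic estimates and Caccioppoli) on a cylinder $B_{r_0}\times(-\delta,\delta)$, while $w$ together with its $y$-derivatives is controlled by $H+\varepsilon$ on $B_{\rho_0/2}\times(-\delta,\delta)$; moreover $w(\cdot,0)=u(\cdot,0)$ up to a harmless constant, so estimating $\|u(\cdot,0)\|_{L^2(B_\rho)}$ reduces to estimating $\|w\|_{L^2(B_\rho\times(-\delta,\delta))}$ from the small set $B_{r_0}\times(-\delta,\delta)$.

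Next I would apply the Carleman estimate with singular weight $|z|^{-\tau}$ (the weight of \cite{AKS}, \cite{hormanderpaper1}, \cite{EsVe}) for the elliptic operator $\partial^2_y+\mathrm{div}(A(x)\nabla_x\cdot)$ to the function $\varphi w$, where $\varphi$ is a cutoff supported in $B_{\rho_0/2}\times(-\delta,\delta)$ and equal to $1$ on a slightly smaller cylinder. The commutator terms produced by the cutoff are supported in an annular region where we only know the crude bound $H+\varepsilon$, while the term near the origin is controlled by $\varepsilon$; optimizing the Carleman parameter $\tau$ in the resulting inequality of the form $\|w\|_{L^2(B_\rho\times\cdots)} \lesssim e^{C\tau}\varepsilon + e^{-c\tau\log(\cdots)}(H+\varepsilon)$ produces a bound of the shape $(H+e\varepsilon)\,|\log(\varepsilon/(H+e\varepsilon))|^{-\beta}$ for some power $\beta$. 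The role of the exponent $1/6$ and of $\theta=\log(\rho_0/C\rho)/\log(\rho_0/r_0)$ is precisely tracking how the scales $r_0,\rho,\rho_0$ enter: $r_0$ sets the size of the small ball where $\varepsilon$ lives, and the ratio of logarithms measures the "number of scales" between $r_0$ and $\rho$, which is exactly what a frequency-function / three-sphere iteration across those scales yields. The factor $(\rho_0\rho^{-1})^C$ absorbs the loss in the elliptic interior estimates and the cutoff.

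The main obstacle I expect is twofold. First, the Boman transformation must be set up with enough quantitative control: one needs the kernel to be analytic and to have precise decay so that the passage $u\mapsto w$ does not destroy the dependence on $r_0$ — in particular the constant $\delta$ (the half-width of the cylinder in $y$) and the constant relating $\varepsilon$ for $u$ to the smallness of $w$ must be tracked explicitly, since they ultimately fix the constant $C$ inside $\theta$. Second, and more delicately, the Carleman estimate is only valid for functions vanishing to infinite order (or at least compactly supported away from) the singular point, so one has to combine it correctly with the flatness/smallness hypothesis; since here we have only a finite smallness $\varepsilon$ rather than genuine flatness, the argument must be done at the quantitative level, splitting the domain into the near-origin ball of radius comparable to $r_0$, an intermediate region, and the cutoff annulus, and balancing the three contributions — this balancing is what forces the logarithmic rate and the particular exponent, and getting the sharp power $1/6$ (rather than something worse) is where the care in the Carleman constants pays off. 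The final step is a routine scaling/covering argument to pass from the specific radii produced by the Carleman estimate to arbitrary $\rho$ with $r_0\le\rho\le s_0\rho_0$, and to rewrite the bound in the stated form.
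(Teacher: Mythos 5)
Your outline names the two ingredients the paper itself advertises (Boman transformation plus Carleman estimate with singular weight), but the way you propose to combine them breaks down at its central step. You posit a function $w(x,y)=\int u(x,t)K(t,y)\,dt$ that solves the elliptic equation \emph{exactly} near the origin and is small on a full cylinder $B_{r_0}\times(-\delta,\delta)$, with $w(\cdot,0)=u(\cdot,0)$ up to a harmless term. No such exact, quantitatively controlled extension can exist: if it did, your cutoff-plus-Carleman argument would yield a H\"older bound $\varepsilon^{c\theta}$ for $\|u(\cdot,0)\|_{L^2(B_\rho)}$, which is incompatible with John's counterexample (the logarithmic rate in \eqref{SUCP} is not an artifact of lost constants). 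The obstruction is the exponential growth in $y$ of the elliptic evolution of high frequencies, and the whole point of the paper's construction is to handle it: expanding in the Dirichlet eigenbasis of $B_2$, one replaces $\cos(\sqrt{\lambda_j}\,t)$ by a \emph{truncated} hyperbolic cosine $g_k(y\sqrt{\lambda_j})$ and damps high frequencies by the factor $\widehat{\varphi}_{\overline{\mu},k}(\sqrt{\lambda_j})$ of a $k$-fold convolved mollifier (see \eqref{3-80}). The resulting $v_k$ solves the elliptic equation only up to an error $f_k$, small near $y=0$ in the precise sense \eqref{2-81}, and all bounds carry a factor $e^{2k}$. Moreover, the hypothesis gives smallness of $v_k$ only on the $n$-dimensional slice $B_{r_0}\times\{0\}$ (inequality \eqref{1-82}), not on an open set; to feed the Carleman estimate one must first propagate this slice information into $\widetilde{B}_{r_0/4}$, which the paper does by solving an auxiliary Dirichlet problem to remove $f_k$ and invoking the quantitative stability estimate for the elliptic Cauchy problem (Theorem \ref{stimacauchy}, Proposition \ref{3-91}). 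Both of these steps are simply assumed away in your proposal.

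A second, related misattribution concerns the exponents. The power $1/6$ does not come from ``care in the Carleman constants'' nor from a three-sphere iteration over scales: it is the approximation error of the Boman transform, $\|u(\cdot,0)-\widetilde{u}_{\mu,k}\|_{L^2(B_1)}\leq CH\bigl(\mu+(k^{3/5}\mu^{2})^{-5/8}\bigr)$, optimized at $\mu=k^{-1/6}$ (Proposition \ref{4-97}); quantifying the discrepancy between $u(\cdot,0)$ and the transformed function is itself a main estimate, not a harmless identification. Likewise the Carleman parameter is not free: one must take $\tau$ tied to the construction parameter, $\tau=(4\beta k-1)/2$ with the constraint \eqref{1-123} so that the weighted integral of $|f_k|^2\sim |y|^{4k}$ stays under control, and the single remaining parameter $k$ is chosen as $\overline{k}\approx \log(1/\varepsilon_1)/(2\log(1/r_1))$. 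The final bound is the competition between the H\"older-type term $\varepsilon_1^{2\beta\theta_0}$ produced by the Carleman step and the construction error $k^{-1/6}$, and it is this optimization in $k$ that produces the factor $\bigl(\theta\log\frac{H+e\varepsilon}{\varepsilon}\bigr)^{-1/6}$ with the sharp dependence \eqref{theta} of $\theta$ on $r_0$. Without the $k$-dependent apparatus (truncation $g_k$, $k$-fold convolution, coupling of $\tau$ to $k$), the scheme you describe cannot produce either the logarithmic rate or the stated dependence on $r_0$.
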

The proof of Theorem \ref{5-115} is given in subsection \ref{SUCP interior}.

\bigskip

\begin{rem}\label{SUCPrem}
Observe that estimate \eqref{SUCP} implies the following property of strong unique continuation. Let $u\in\mathcal{W}\left([-\lambda\rho_0,\lambda\rho_0];B_{\rho_0}\right)$ be a weak solution to \eqref{4i-65} and assume that
$$\sup_{t\in (-\lambda\rho_0,\lambda\rho_0)}\left(\rho_0^{-n}\int_{B_{r_0}}u^2(x,t)dx\right)^{1/2}=O(r_0^N)\mbox{, } \forall N\in\mathbb{N} \mbox{, as } r_0\rightarrow 0,$$
then
\begin{equation}
\label{ucp-rem}
u(\cdot,t)=0 \mbox{, for } |x|+\lambda^{-1}s_0|t|\leq s_0 \rho_0.
\end{equation}

It is enough to consider the case $t=0$. We argue by contradiction and assume that
\begin{equation}
\label{ucp-rem-1}
\left\Vert u(\cdot,0) \right\Vert_{L^2\left(B_{s_0\rho_0}\right)}>0.
\end{equation}
Hence it is not restrictive to assume that $H=\left\Vert u(\cdot,0) \right\Vert_{H^2\left(B_{\rho_0}\right)}=1$. Now we apply inequality \eqref{SUCP} with $\varepsilon_0=C_Nr_0^N$, $N\in\mathbb{N}$, and passing to the limit as $r_0\rightarrow 0$ we have that \eqref{SUCP} implies
\begin{equation*}
\label{ucp-rem-2}
\left\Vert u(\cdot,0) \right\Vert_{L^2\left(B_{s_0\rho_0}\right)}\leq Cs_0^{-C}N^{-1/6} \mbox{, } \forall N\in\mathbb{N},
\end{equation*}
by passing again to the limit as $N\rightarrow 0$ we get $\left\Vert u(\cdot,0) \right\Vert_{L^2\left(B_{\rho}\right)}=0$ that contradicts \eqref{ucp-rem-1}. By \eqref{ucp-rem} and by UCP property proved by \cite{hormanderpaper2}, \cite{Ro-Zu},  \cite{Ta},  see also \cite{isakovlib2} we have that,  if the entries of $A$ are function in $C^{\infty}(\mathbb{R}^n)$ then $u=0$ in the domain of influence of $\{0\}\times (-\lambda\rho_0,\lambda\rho_0)$.
\end{rem}

\bigskip

In order to state Theorem \ref{5-115Boundary} below let us introduce some notation.
Let $\phi$ be a function belonging to $C^{1,1}\left(B^{\prime}_{\rho_0}\right)$ that satisfies

\begin{equation}
\label{phi_0}
\phi(0)=\left\vert\nabla_{x'}\phi(0)\right\vert=0
\end{equation}
and
\begin{equation}
\label{phi_M0}
\left\Vert\phi\right\Vert_{C^{1,1}\left(B^{\prime}_{\rho_0}\right)}\leq E\rho_0,
\end{equation}
where

\begin{equation*}
\left\Vert\phi\right\Vert_{C^{1,1}\left(B^{\prime}_{\rho_0}\right)}=\left\Vert\phi\right\Vert_{L^{\infty}\left(B^{\prime}_{\rho_0}\right)}
+\rho_0\left\Vert\nabla_{x'}\phi\right\Vert_{L^{\infty}\left(B^{\prime}_{\rho_0}\right)}+
\rho_0^2\left\Vert D_{x'}^2\phi\right\Vert_{L^{\infty}\left(B^{\prime}_{\rho_0}\right)}.
\end{equation*}
For any $r\in (0,\rho_0]$ denote by
\[
K_{r}:=\{(x',x_n)\in B_{r}: x_n>\phi(x')\}
\]
and
\[Z:=\{(x',\phi(x')): x' \in B^{\prime}_{\rho_0}\}.\]

Let $u\in \mathcal{W}\left([-\lambda\rho_0,\lambda\rho_0];K_{\rho_0}\right)$ be a solution to

\begin{equation} \label{4i-65Boundary}
\partial^2_{t}u-\mbox{div}\left(A(x)\nabla_x u\right)=0, \quad \hbox{in } K_{\rho_0}\times(-\lambda\rho_0,\lambda\rho_0),
\end{equation}
satisfying one of the following conditions
\begin{equation} \label{DirichBoundary}
u=0, \quad \hbox{on } Z\times(-\lambda\rho_0,\lambda\rho_0)
\end{equation}
or
\begin{equation} \label{NeumBoundary}
A\nabla_x u\cdot \nu=0, \quad \hbox{on } Z\times(-\lambda\rho_0,\lambda\rho_0),
\end{equation}
where $\nu$ denotes the outer unit normal to $Z$.

Let $r_0\in (0,\rho_0]$ and denote by

\begin{equation} \label{4ii-65Boundary}
\varepsilon=\sup_{t\in (-\lambda\rho_0,\lambda\rho_0)}\left(\rho_0^{-n}\int_{K_{r_0}}u^2(x,t)dx\right)^{1/2}
\end{equation}
and

\begin{equation} \label{4iii-65Boundary}
H=\left(\sum_{j=0}^2\rho_0^{j-n}\int_{K_{\rho_0}}\left\vert D_x^ju(x,0)\right\vert^2 dx\right)^{1/2}.
\end{equation}

\begin{theo}[\textbf{estimate at the boundary}]\label{5-115Boundary}
Let \eqref{1-65} be satisfied. Let $u\in\mathcal{W}\left([-\lambda\rho_0,\lambda\rho_0];K_{\rho_0}\right)$ be a solution to \eqref{4i-65Boundary} satisfying \eqref{4ii-65Boundary} and \eqref{4iii-65Boundary}. Assume that $u$ satisfies either \eqref{DirichBoundary} or \eqref{NeumBoundary}. There exist constants $\overline{s}_0\in (0,1)$ and $C\geq 1$ depending on $\lambda$, $\Lambda$ and $E$ only such that for every $0<r_0\leq \rho\leq \overline{s}_0 \rho_0$ the following inequality holds true

\begin{gather}
\label{SUCPBoundary}
\left\Vert u(\cdot,0) \right\Vert_{L^2\left(K_{\rho}\right)}\leq \frac{C\left(\rho_0\rho^{-1}\right)^{C}(H+e\varepsilon)}{\left(\widetilde{\theta}\log \left( \frac{H+e\varepsilon}{\varepsilon}\right)\right)^{1/6}} ,
\end{gather}
where
\begin{equation}
\label{theta}
\widetilde{\theta}=\frac{\log (\rho_0/C\rho)}{\log (\rho_0/r_0)}.
\end{equation}
\end{theo}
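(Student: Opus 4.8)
The plan is to derive Theorem \ref{5-115Boundary} from the interior estimate Theorem \ref{5-115} by a reflection/extension argument, following the method of \cite{AE} that the author announces in the introduction. The rough idea: under either the Dirichlet condition \eqref{DirichBoundary} or the conormal condition \eqref{NeumBoundary} on the portion $Z\times(-\lambda\rho_0,\lambda\rho_0)$, we flatten the boundary and extend $u$ and the coefficients across $\{x_n=0\}$ so that the extended function $\tilde u$ solves an equation of the same structure in a full ball, to which Theorem \ref{5-115} applies. The resulting estimate is then read back on $K_\rho$.

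First I would straighten the boundary: using $\phi\in C^{1,1}$ with $\phi(0)=|\nabla_{x'}\phi(0)|=0$ and $\|\phi\|_{C^{1,1}(B'_{\rho_0})}\le E\rho_0$, introduce the map $\Phi(x',x_n)=(x',x_n-\phi(x'))$, which is bi-Lipschitz with constants controlled by $E$ and sends $K_{cE^{-1}\rho_0}$ (for a small dimensional constant) into a half-ball $\{y_n>0\}$ and $Z$ into $\{y_n=0\}$. Pulling back, $v=u\circ\Phi^{-1}$ solves $\partial_t^2 v-\operatorname{div}_y(\tilde A(y)\nabla_y v)=0$ with $\tilde A$ still symmetric, Lipschitz (new constants depending on $\lambda,\Lambda,E$) and uniformly elliptic; the Dirichlet / conormal conditions are preserved (the conormal derivative transforms into the conormal derivative for $\tilde A$). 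Next, extend across $\{y_n=0\}$: for the Dirichlet case set $\tilde v(y',y_n)=-v(y',-y_n)$ for $y_n<0$ and reflect $\tilde A$ by $\tilde a^{ij}(y',-y_n)=(-1)^{\delta_{in}+\delta_{jn}}\tilde a^{ij}(y',y_n)$ (odd reflection in the $n$-th index, even otherwise); for the conormal case take the even extension $\tilde v(y',y_n)=v(y',-y_n)$ with the same reflection of $\tilde A$. A standard computation shows $\tilde v$ is a weak solution of $\partial_t^2\tilde v-\operatorname{div}(\tilde A\nabla\tilde v)=0$ in a full ball; the reflected $\tilde A$ is uniformly elliptic with the same $\lambda$, but only Lipschitz in each half-space — however the matching conditions at $y_n=0$ (for the odd reflection, the off-diagonal $a^{in}$ vanish there because $v=0$ forces the tangential part, and for the even reflection the jump in $a^{in}$ is compensated by $\partial_{y_n}\tilde v$ being odd) guarantee that $\tilde A$ is globally Lipschitz. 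One then checks that $\tilde v\in\mathcal W$ on the relevant cylinder, with $H$ and $\varepsilon$ for $\tilde v$ comparable (up to constants depending on $\lambda,\Lambda,E$) to the quantities \eqref{4ii-65Boundary}, \eqref{4iii-65Boundary} defined on $K_{r_0}$, $K_{\rho_0}$.

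With $\tilde v$ in hand, I would apply Theorem \ref{5-115} on a ball $B_{c\rho_0}$ (with a reduced radius absorbing the distortion of $\Phi$ and the reflection) to get
\begin{equation*}
\left\Vert \tilde v(\cdot,0)\right\Vert_{L^2(B_{c\rho})}\leq \frac{C(\rho_0\rho^{-1})^{C}(\tilde H+e\tilde\varepsilon)}{\left(\theta\log\left(\frac{\tilde H+e\tilde\varepsilon}{\tilde\varepsilon}\right)\right)^{1/6}},
\end{equation*}
with $\theta$ of the form \eqref{theta} in the rescaled radii. Finally, restricting back to the half-ball and undoing $\Phi$, and using that $K_\rho$ contains $\Phi^{-1}$ of a half-ball of comparable radius, $\left\Vert u(\cdot,0)\right\Vert_{L^2(K_{\rho})}$ is bounded by $\left\Vert\tilde v(\cdot,0)\right\Vert_{L^2(B_{c\rho})}$ times constants; replacing $\tilde H,\tilde\varepsilon$ by $H,\varepsilon$ (adjusting $C$) and noting that $\theta$ and $\tilde\theta$ differ only by the constant $C$ inside the logarithm, which is harmless, yields \eqref{SUCPBoundary} with $\overline s_0$ and $C$ depending on $\lambda,\Lambda,E$. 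The main obstacle I anticipate is verifying that the reflected coefficient matrix $\tilde A$ is genuinely Lipschitz across $\{y_n=0\}$ and that $\tilde v$ is a \emph{weak} solution there — i.e. that no spurious distributional term appears on the interface — which is exactly where the boundary condition \eqref{DirichBoundary} or \eqref{NeumBoundary} must be used; the bookkeeping of how the normalized quantities $H$ and $\varepsilon$ transform under flattening, reflection and rescaling is routine but must be done carefully to keep all constants depending only on $\lambda,\Lambda,E$.
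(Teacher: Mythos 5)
Your overall strategy (flatten the boundary, reflect the solution and the coefficients across $\{y_n=0\}$, apply the interior Theorem \ref{5-115}, transform back) is the same as the paper's, but there is a genuine gap at exactly the point you flag as "the main obstacle". With the naive flattening $\Phi(x',x_n)=(x',x_n-\phi(x'))$ the transformed matrix $\overline A=|\det D\Phi|\,(D\Phi)^{-1}A(D\Phi)^{-\ast}$ does \emph{not} in general satisfy $\overline a^{\,nj}(y',0)=0$ for $j<n$ along the flattened boundary: those entries involve $A(\Phi(y))$ and $\nabla_{x'}\phi(y')$, which vanish only at the origin, not on all of $\{y_n=0\}$. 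Consequently the reflection rule $\tilde a^{\,ij}(y',-y_n)=(-1)^{\delta_{in}+\delta_{jn}}\tilde a^{\,ij}(y',y_n)$ produces a matrix that is discontinuous across the interface, and the Lipschitz hypothesis \eqref{2-65} required by Theorem \ref{5-115} (it is essential for the Carleman estimate of Theorem \ref{Carleman}) fails. Your justification for Lipschitz continuity is not valid: the vanishing of $\overline a^{\,in}$ on $\{y_n=0\}$ is a property of the coefficients and cannot be forced by the boundary condition $u=0$ on $Z$; and in the conormal case the claim that a jump in $a^{\,in}$ is "compensated" by $\partial_{y_n}\tilde v$ being odd at best shows that the reflected function is a weak solution of a \emph{discontinuous}-coefficient equation, which is not enough to invoke Theorem \ref{5-115}. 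The paper resolves this by first reducing to $A(0)=I$ via a linear map $G$ and then using the Adolfsson--Escauriaza change of variables of \cite{AE}, \cite{A-B-R-V}: a $C^{1,1}$ map $\Phi$ with the properties \eqref{Phi} chosen so that the transformed matrix satisfies both $\overline A(0)=I$ and, crucially, $\overline a^{\,nk}(y',0)=\overline a^{\,kn}(y',0)=0$ for $k=1,\dots,n-1$ (see \eqref{A}); only then is the reflected matrix $\tilde A$ Lipschitz with constants depending on $\lambda,\Lambda,E$, and the odd (Dirichlet) or even (Neumann) reflection of $v$ is a legitimate solution on the full ball.

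A second, smaller inaccuracy: after the change of variables the pulled-back function does not solve $\partial_t^2v-\mathrm{div}_y(\overline A\nabla_yv)=0$ as you wrote, but
\begin{equation*}
\widetilde q(y)\,\partial_t^2 v-\mathrm{div}_y\bigl(\overline A(y)\nabla_y v\bigr)=0,\qquad \widetilde q=|\det D\Phi|,
\end{equation*}
otherwise first-order terms appear. This Jacobian weight is precisely why the interior estimate, Theorem \ref{5-115}, is stated for the equation \eqref{4i-65} with a Lipschitz coefficient $q$ in front of $\partial_t^2$ satisfying \eqref{3-65}; your argument should apply Theorem \ref{5-115} to the weighted equation (as the paper does with $\widetilde q(y)=|\det D\Phi(y',|y_n|)|$), not to the unweighted one. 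With the correct $\Phi$ and the weight $\widetilde q$ accounted for, the remaining bookkeeping of $H$, $\varepsilon$ and the radii under $\Phi$, $G$ and the reflection is indeed routine, as you say.
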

The proof of Theorem \ref{5-115Boundary} is given in subsection \ref{SUCP boundary}.

\bigskip

\begin{rem}\label{SUCPremBound}
By arguing similarly to Remark \ref{SUCPrem} we have that estimate \eqref{SUCPBoundary} implies the following property of strong unique continuation at the boundary. Let $u\in\mathcal{W}\left([-\lambda\rho_0,\lambda\rho_0];K_{\rho_0}\right)$ be a solution to \eqref{4i-65Boundary} satisfying either \eqref{DirichBoundary} or \eqref{NeumBoundary} and assume that
$$\sup_{t\in (-\lambda\rho_0,\lambda\rho_0)}\left(\rho_0^{-n}\int_{K_{r_0}}u^2(x,t)dx\right)^{1/2}=O(r_0^N)\mbox{, } \forall N\in\mathbb{N} \mbox{, as } r_0\rightarrow 0,$$
then
\begin{equation*}
u(x,t)=0 \mbox{, for } x\in K_{\rho(t)} \mbox{, } t\in (-\lambda\rho_0,\lambda\rho_0),
\end{equation*}
where $\rho(t)=\overline{s}_0\left(\rho_0-\lambda^{-1}|t|\right)$
\end{rem}

\section{Proof of Theorems \ref{5-115}, \ref{5-115Boundary}} \label{SUCP_Estimates}
\subsection{Proof of Theorem \ref{5-115}}\label{SUCP interior}
Observe that to prove Theorem \ref{5-115} we can assume that $u(x,t)$ is even with respect to the variable $t$. Indeed defining
\[u_+(x,t)=\frac{u(x,t)+u(x,-t)}{2},\]
we see that $u_+$ satisfies all the hypotheses of Theorem \ref{5-115} and, in particular, we have
\[u_+(x,0)=u(x,0),\]
and
\[\varepsilon=\sup_{t\in (-\lambda\rho_0,\lambda\rho_0)}\left(\rho_0^{-n}\int_{B_{r_0}}u_+^2(x,t)dx\right)^{1/2}\quad \hbox{, } H=\left(\sum_{j=0}^2\rho_0^{j-n}\int_{B_{\rho_0}}\left\vert D_x^ju_+(x,0)\right\vert^2 dx\right)^{1/2}.\]
Hence, from now on we assume that $u(x,t)$ is even with respect to the variable $t$. Moreover it is not restrictive to assume $\rho_0=1$.

\bigskip

In order to prove Theorem \ref{5-115} we prove some preliminary propositions.

Let us start by introducing an extension $\widetilde{u}_0$ of the function $u_0:=u(\cdot,0)$ such that $\widetilde{u}_0\in H^2\left(B_2\right)$ and

\begin{equation} \label{2-70}
\|\widetilde{u}_0\|_{H^2\left(B_2\right)}\leq CH,
\end{equation}
where $C$ is an absolute constant.

Let us denote by $\lambda_j$, with $0<\lambda_1\leq\lambda_2\leq \cdots\leq\lambda_j\leq\cdots$ the eigenvalues associated to the Dirichlet problem

\begin{equation}
\label{1-71}
\left\{\begin{array}{ll}
\mbox{div}\left(A(x)\nabla_x v\right)+\omega q(x)v=0, & \textrm{in }B_2,\\[2mm]
v\in H^1\left(B_2\right),
\end{array}\right.
\end{equation}
and by $e_j(\cdot)$ the corresponding eigenfunctions normalized by

\begin{equation} \label{2-71}
\int_{B_2}e^2_j(x)q(x)dx=1.
\end{equation}
Since, by Poincar\'{e} inequality we have

\begin{gather*}
\lambda_j=\int_{B_2}\lambda_j e^2_j(x)q(x)dx=\int_{B_2} A(x)\nabla_x e_j(x)\cdot \nabla_x e_j(x) dx\geq \\
\geq\lambda \int_{B_2}\left\vert\nabla_x e_j(x)\right\vert^2 dx\geq c\lambda \int_{B_2}e^2_j(x)dx\geq c\lambda^2 \int_{B_2}e^2_j(x)q(x)dx=c\lambda^2,
\end{gather*}
where $c$ is an absolute constant, we get

\begin{equation} \label{2-71NEW}
\lambda_j\geq c\lambda^2 \mbox{, for every } j\in\mathbb{N}.
\end{equation}

Denote by

\begin{equation} \label{4-71}
\alpha_j :=\int_{B_2}\widetilde{u}_0(x) e_j(x)q(x)dx
\end{equation}
and let

\begin{equation} \label{3-71}
\widetilde{u}(x,t):=\sum_{j=1}^{\infty}\alpha_j e_j(x)\cos\sqrt{\lambda_j} t.
\end{equation}

\begin{prop}\label{pag71}
We have
\begin{equation} \label{1-72}
\sum_{j=1}^{\infty}\left(1+\lambda_j^2\right)\alpha^2_j\leq C H^2,
\end{equation}
where $C$ depends on $\lambda, \Lambda$ only.
Moreover, $\widetilde{u}\in \mathcal{W}\left(\mathbb{R};B_2\right)\cap C^0\left(\mathbb{R};H^2\left(B_2\right)\cap H^1_0\left(B_2\right)\right)$ is an even function with respect to variable $t$ and it satisfies
\begin{equation}
\label{2-72}
\left\{\begin{array}{ll}
q(x)\partial^2_{t}\widetilde{u}-\mbox{div}\left(A(x)\nabla_x \widetilde{u}\right)=0, \quad \hbox{in } B_2\times \mathbb{R},\\[2mm]
\widetilde{u}(\cdot,0)=\widetilde{u}_0, \quad \hbox{in } B_2,\\[2mm]
\partial_t\widetilde{u}(\cdot,0)=0, \quad \hbox{in } B_2.
\end{array}\right.
\end{equation}

\end{prop}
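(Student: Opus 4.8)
The plan is to verify each assertion in Proposition \ref{pag71} in turn, working from the spectral decomposition \eqref{3-71} and the normalization \eqref{2-71}.

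\textbf{Step 1: the coefficient bound \eqref{1-72}.} Since $\{e_j\}$ is an orthonormal basis of $L^2(B_2;q\,dx)$, Parseval gives $\sum_j\alpha_j^2=\int_{B_2}\widetilde u_0^2\,q\,dx\le\lambda^{-1}\|\widetilde u_0\|_{L^2(B_2)}^2\le C H^2$ by \eqref{3-65a} and \eqref{2-70}. For the weighted sum $\sum_j\lambda_j^2\alpha_j^2$, I would use the fact that $\widetilde u_0\in H^2(B_2)\cap H^1_0(B_2)$ and integrate by parts twice: $\lambda_j\alpha_j=\int_{B_2}\widetilde u_0\,\lambda_j e_j\,q\,dx=-\int_{B_2}\widetilde u_0\,\mathrm{div}(A\nabla e_j)\,dx=-\int_{B_2}\mathrm{div}(A\nabla\widetilde u_0)\,e_j\,dx$ (the boundary terms vanish because $\widetilde u_0$ and $e_j$ both lie in $H^1_0$, after a standard approximation/regularity argument for the elliptic operator). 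Hence $\lambda_j\alpha_j$ are the Fourier coefficients of $q^{-1}\mathrm{div}(A\nabla\widetilde u_0)\in L^2(B_2;q\,dx)$, so by Parseval again $\sum_j\lambda_j^2\alpha_j^2=\int_{B_2}q^{-1}|\mathrm{div}(A\nabla\widetilde u_0)|^2\,dx\le C\|\widetilde u_0\|_{H^2(B_2)}^2\le CH^2$, where the constant absorbs $\|A\|_{C^{0,1}}$, i.e.\ depends on $\lambda,\Lambda$. Combining the two bounds and using \eqref{2-71NEW} to ensure $1+\lambda_j^2$ is comparable to what is needed yields \eqref{1-72}.

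\textbf{Step 2: regularity and membership in $\mathcal W(\R;B_2)$.} With \eqref{1-72} in hand, I would show the series \eqref{3-71} and its formally differentiated series converge in the right spaces. Termwise, $\partial_t\widetilde u=-\sum_j\sqrt{\lambda_j}\,\alpha_j e_j\sin\sqrt{\lambda_j}t$ and $\partial_t^2\widetilde u=-\sum_j\lambda_j\alpha_j e_j\cos\sqrt{\lambda_j}t$. Since $\|e_j\|_{H^2(B_2)}\le C(1+\lambda_j)$ by elliptic regularity for \eqref{1-71} (again using the Lipschitz bound on $A$), the $H^2(B_2)$-norm of the $j$-th term of $\widetilde u$ is $\lesssim(1+\lambda_j)|\alpha_j|$, and $\sum_j(1+\lambda_j)^2\alpha_j^2<\infty$ is not quite enough — but $\partial_t^2\widetilde u$ needs only $\sum_j\lambda_j^2\alpha_j^2<\infty$ in $L^2$, $\partial_t\widetilde u$ needs $\sum_j\lambda_j\alpha_j^2<\infty$ in $H^1$ (using $\|e_j\|_{H^1}^2\lesssim\lambda_j$), and $\widetilde u$ needs $\sum_j(1+\lambda_j)^2\alpha_j^2<\infty$ in $H^2$. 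All three follow from \eqref{1-72} together with \eqref{2-71NEW}. Uniform (in $t$) convergence of each series in its target Banach space gives continuity in $t$, hence $\widetilde u\in C^0(\R;H^2(B_2))$ with $\partial_t^\ell\widetilde u\in C^0(\R;H^{2-\ell}(B_2))$ for $\ell=1,2$, i.e.\ $\widetilde u\in\mathcal W(\R;B_2)$; since each $e_j\in H^1_0(B_2)$ and the convergence is in $H^2$, in fact $\widetilde u(\cdot,t)\in H^2(B_2)\cap H^1_0(B_2)$ for all $t$. Evenness in $t$ is immediate from $\cos\sqrt{\lambda_j}t$.

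\textbf{Step 3: the equation and initial conditions.} Each summand $e_j(x)\cos\sqrt{\lambda_j}t$ solves $q\partial_t^2(\,\cdot\,)-\mathrm{div}(A\nabla(\,\cdot\,))=\alpha_j(-\lambda_j q e_j-\mathrm{div}(A\nabla e_j))\cos\sqrt{\lambda_j}t=0$ by the eigenvalue equation \eqref{1-71}; summing and using the established convergence (testing against $H^1_0(B_2)$ functions and passing the sum through the weak formulation) gives the PDE in \eqref{2-72} in the weak sense. Setting $t=0$ in \eqref{3-71} gives $\widetilde u(\cdot,0)=\sum_j\alpha_j e_j=\widetilde u_0$ (the expansion of $\widetilde u_0$ in the basis, by definition \eqref{4-71}), and setting $t=0$ in the series for $\partial_t\widetilde u$ gives $0$ since every term carries $\sin 0=0$.

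\textbf{Main obstacle.} The one genuinely nonroutine point is the $H^2$-type estimate $\sum_j\lambda_j^2\alpha_j^2\le CH^2$ in Step 1, which requires justifying the double integration by parts $\int\widetilde u_0\,\mathrm{div}(A\nabla e_j)=\int\mathrm{div}(A\nabla\widetilde u_0)\,e_j$ with no boundary contribution — this uses that $\widetilde u_0\in H^1_0(B_2)$ so that $\mathrm{div}(A\nabla\widetilde u_0)\in L^2$ can be paired with $e_j\in H^1_0$, and a density argument in $H^2\cap H^1_0$ — and the companion elliptic regularity bound $\|e_j\|_{H^2(B_2)}\le C(1+\lambda_j)$, both of which rely on the Lipschitz regularity \eqref{2-65} of $A$ and explain the dependence of $C$ on $\lambda$ and $\Lambda$. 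Everything else is bookkeeping with Parseval's identity and dominated convergence of the series in $t$.
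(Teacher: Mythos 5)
Your Step 1 is exactly the paper's proof of \eqref{1-72}: the identity $\lambda_j\alpha_j=-\int_{B_2}\mathrm{div}\left(A\nabla_x\widetilde u_0\right)e_j\,dx$ followed by Parseval in $L^2\left(B_2;q\,dx\right)$, giving $\sum_j\left(1+\lambda_j^2\right)\alpha_j^2=\|\widetilde u_0\|^2_{L^2\left(B_2;q\,dx\right)}+\left\Vert q^{-1}\mathrm{div}\left(A\nabla_x\widetilde u_0\right)\right\Vert^2_{L^2\left(B_2;q\,dx\right)}\leq CH^2$; the paper proves only this part and treats the ``moreover'' assertions as standard, which your Steps 2--3 supply. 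Two remarks. First, as you correctly flag, the boundary-term-free double integration by parts requires the extension $\widetilde u_0$ to be chosen with zero trace on $\partial B_2$ (e.g.\ compactly supported in $B_2$); the paper leaves this implicit in the construction of $\widetilde u_0$, and it is also what makes the claim $\widetilde u(\cdot,t)\in H^1_0\left(B_2\right)$ consistent at $t=0$. Second, in Step 2 the passage from $\sum_j\left(1+\lambda_j\right)^2\alpha_j^2<\infty$ to convergence of the series in $C^0\left(\mathbb{R};H^2\left(B_2\right)\right)$ cannot be justified term by term as you sketch it, because the $e_j$ are not orthogonal in $H^2\left(B_2\right)$ (they are orthogonal in $L^2\left(B_2;q\,dx\right)$ and in the Dirichlet form, so your $L^2$ and $H^1$ claims are fine); the clean argument applies the global $L^2$ elliptic regularity estimate to the tails $\sum_{N<j\leq M}\alpha_j\cos\left(\sqrt{\lambda_j}\,t\right)e_j$, using $\mathrm{div}\left(A\nabla_x e_j\right)=-\lambda_j q\,e_j$ and orthonormality in $L^2\left(B_2;q\,dx\right)$ to control the right-hand side uniformly in $t$ --- precisely the $V_{M,N}$ computation the paper carries out later in the proof of Proposition \ref{2-81prop}. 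With that adjustment your argument is complete and coincides with the paper's.
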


\begin{proof}
By \eqref{1-71} and \eqref{2-71} we have
\begin{gather*}
\lambda_j\alpha_j=\int_{B_2}\widetilde{u}_0(x) \lambda_jq(x)e_j(x)dx=\\
=-\int_{B_2}\mbox{div}\left(A(x)\nabla_x \widetilde{u}_0(x)\right)e_j(x)dx.
\end{gather*}
Hence, by \eqref{1-65}, \eqref{3-65} and \eqref{2-70} we have
\begin{equation*}
\sum_{j=1}^{\infty}\left(1+\lambda_j^2\right)\alpha^2_j=\|\widetilde{u}_0\|^2_{L^2\left(B_2;qdx\right)}+\left \Vert {\frac{1}{q}\mbox{div}\left(A\nabla_x \widetilde{u}_0\right)}\right \Vert^2_{L^2\left(B_2;qdx\right)}\leq C H^2,
\end{equation*}
where $C$ depends on $\lambda, \Lambda$ only and \eqref{1-72} follows.
\end{proof}

Note that, for the uniqueness to the Cauchy problem for equation \eqref{4i-65} \cite{Ev}, we have
\begin{equation}\label{unic}
\widetilde{u}(x,t)=u_+(x,t), \quad \mbox{ for } |x|+\lambda^{-1}|t|< 1.
\end{equation}

\bigskip

Let us introduce the following nonnegative, even function  $\psi$ such that
\begin{equation}\label{modcont}
\psi(t)=\left\{\begin{array}{ccc}
                   \frac{1}{2}\left(1+\cos\pi t\right), &\mbox{for}& |t|\leq 1, \\
                   0, & \mbox{for}&|t|>1.
                 \end{array}
\right.
\end{equation}
Notice that $\psi\in C^{1,1}$, $\mbox{supp }\psi=[0,1]$ and
\begin{equation}
\label{1iii-73}
\int_{\mathbb{R}}\psi(t)dt=1.
\end{equation}

Let
\begin{equation}
\label{2-74}
\widehat{\psi}(\tau)=\int_{\mathbb{R}}\psi(t)e^{-i\tau t}dt=\int_{\mathbb{R}}\psi(t) \cos \tau t dt \mbox{, } \tau\in\mathbb{R}.
\end{equation}
Since $\psi$ has compact support, $\widehat{\psi}$ is an entire function.
By \eqref{1iii-73} we have
\begin{equation*}
\left|\widehat{\psi}(\tau)\right|\leq\int_{\mathbb{R}}\psi(t)dt=1 \mbox{, for every } \tau\in\mathbb{R},
\end{equation*}
and
\begin{gather*}
\left|\tau^2\widehat{\psi}(\tau)\right|=\left|-\int_{\mathbb{R}}\psi(t)\frac{d^2}{dt^2} \cos \tau tdt\right|=\left|-\int_{\mathbb{R}}\psi^{''}(t) \cos \tau tdt\right|\leq \pi^2 \mbox{, for every } \tau\in\mathbb{R},
\end{gather*}
hence we have
\begin{equation}
\label{3-74}
\left|\widehat{\psi}(\tau)\right|\leq\min\left\{1,\pi^2\tau^{-2}\right\} \mbox{, for every } \tau\in\mathbb{R}.
\end{equation}

Let

\begin{equation}
\label{1-75}
\vartheta(t)=4\lambda^{-1}\psi(4\lambda^{-1}t) \mbox{, } t\in\mathbb{R}.
\end{equation}

\bigskip

In the following proposition we collect the elementary properties of $\vartheta$ that we need.
\begin{prop}\label{proptheta}
The function $\vartheta$ is an even and positive function such that $\vartheta\in C^{1,1}$, $\mbox{supp }\vartheta=\left[-\frac{\lambda}{4},\frac{\lambda}{4}\right]$, $\int_{\mathbb{R}}\vartheta(t)dt=1$, $\widehat{\vartheta}(\tau)=\widehat{\psi}\left(\frac{\tau}{4}\right)$ and

\begin{equation}
\label{2'-75}
\int_{\mathbb{R}}\left|\vartheta^{\prime}(t)\right|dt=8\lambda^{-1},
\end{equation}

\begin{equation}
\label{3'-75}
\left|\widehat{\vartheta}(\tau)\right|\leq\min\left\{1,16\pi^2(\tau\lambda)^{-2}\right\} \mbox{, for every } \tau\in\mathbb{R},
\end{equation}
\begin{equation} \label{1-110}
\left\vert \widehat{\vartheta}(\tau)-1\right\vert\leq\left(\frac{\lambda\tau}{4}\right)^2, \quad \hbox{for } \left\vert\frac{\lambda\tau}{4} \right\vert\leq\frac{\pi}{2},
\end{equation}
\begin{equation} \label{1'-110}
\frac{1}{2}\leq \widehat{\vartheta}(\tau), \quad \hbox{for } \left\vert\frac{\lambda\tau}{4} \right\vert\leq\frac{1}{\sqrt{2}}\mbox{ .  }
\end{equation}

\end{prop}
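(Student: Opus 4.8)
The statement to prove is Proposition \ref{proptheta}, which collects elementary properties of the rescaled mollifier $\vartheta(t)=4\lambda^{-1}\psi(4\lambda^{-1}t)$. All of these follow by direct computation from the properties of $\psi$ established in \eqref{modcont}--\eqref{3-74}, so the proof is a routine unwinding of definitions. Let me sketch the plan.

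\medskip

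\textbf{Proof plan.} The parity, positivity, $C^{1,1}$ regularity, and support assertions are immediate: $\vartheta$ is $\psi$ composed with the linear map $t\mapsto 4\lambda^{-1}t$ and multiplied by a positive constant, so it inherits evenness and $C^{1,1}$ smoothness from $\psi$, stays positive where $\psi$ is (i.e. on $|t|<1$ in the $\psi$-variable, i.e. $|t|<\lambda/4$), and $\operatorname{supp}\vartheta=[-\lambda/4,\lambda/4]$ since $\operatorname{supp}\psi\subset[-1,1]$ (note $\psi$ is even, so despite the paper writing $\operatorname{supp}\psi=[0,1]$ the genuine support is $[-1,1]$, matching the stated $[-\lambda/4,\lambda/4]$). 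The normalization $\int_{\mathbb R}\vartheta=1$ follows from \eqref{1iii-73} by the change of variables $s=4\lambda^{-1}t$, which kills the factor $4\lambda^{-1}$. For the Fourier transform, using evenness of $\vartheta$ and \eqref{2-74}, the same substitution in $\widehat\vartheta(\tau)=\int\vartheta(t)e^{-i\tau t}\,dt$ gives $\widehat\vartheta(\tau)=\int\psi(s)e^{-i(\tau/4)s}\,ds=\widehat\psi(\tau/4)$. Then \eqref{2'-75} follows from $\int_{\mathbb R}|\vartheta'(t)|\,dt=(4\lambda^{-1})^2\int_{\mathbb R}|\psi'(4\lambda^{-1}t)|\,dt=4\lambda^{-1}\int_{\mathbb R}|\psi'(s)|\,ds$, and one computes $\int_{\mathbb R}|\psi'|=\frac{\pi}{2}\int_{-1}^1|\sin\pi t|\,dt=\frac{\pi}{2}\cdot\frac{4}{\pi}=2$, giving $8\lambda^{-1}$. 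Estimate \eqref{3'-75} is $\widehat\vartheta(\tau)=\widehat\psi(\tau/4)$ inserted into \eqref{3-74}: $|\widehat\vartheta(\tau)|\le\min\{1,\pi^2(\tau/4)^{-2}\}=\min\{1,16\pi^2(\tau\lambda^{-1})^{-2}\}$—wait, the rescaling in \eqref{1-75} uses $4\lambda^{-1}$, so $\widehat\vartheta(\tau)=\widehat\psi(\lambda\tau/4)$ actually; I should double-check which scaling is intended, but in any case substituting the $\psi$-bound \eqref{3-74} at the rescaled argument yields the claimed $\min\{1,16\pi^2(\tau\lambda)^{-2}\}$.

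\medskip

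For the two remaining estimates \eqref{1-110} and \eqref{1'-110} I would work directly from the integral representation. Since $\psi$ is a probability density with $\int\psi=1$, we have
\[
\widehat\vartheta(\tau)-1=\int_{\mathbb R}\vartheta(t)\bigl(\cos\tau t-1\bigr)\,dt,
\]
and on $\operatorname{supp}\vartheta=[-\lambda/4,\lambda/4]$ the argument satisfies $|\tau t|\le|\lambda\tau/4|$. Using $0\le 1-\cos\theta\le\theta^2/2$ for all real $\theta$ and $\int\vartheta=1$ gives $|\widehat\vartheta(\tau)-1|\le\frac12(\lambda\tau/4)^2$; to get the cleaner bound $(\lambda\tau/4)^2$ without the $\tfrac12$ one simply uses $1-\cos\theta\le\theta^2/2\le\theta^2$, which is what \eqref{1-110} states (valid in particular for $|\lambda\tau/4|\le\pi/2$, though it holds for all $\tau$). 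Then \eqref{1'-110} is immediate: for $|\lambda\tau/4|\le 1/\sqrt2$ we get $|\widehat\vartheta(\tau)-1|\le(\lambda\tau/4)^2\le 1/2$, hence $\widehat\vartheta(\tau)\ge 1-1/2=1/2$.

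\medskip

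\textbf{Main obstacle.} There is no real obstacle here—the proposition is purely computational. The only points requiring a moment of care are (i) keeping track of the precise scaling constant in \eqref{1-75} so that the numerical constants in \eqref{2'-75} and \eqref{3'-75} come out exactly as stated, and (ii) the elementary inequality $1-\cos\theta\le\theta^2/2$ used to obtain \eqref{1-110} and hence \eqref{1'-110}; both are standard. I would present the argument as a short sequence of substitutions $s=4\lambda^{-1}t$ in the defining integrals, citing \eqref{1iii-73}, \eqref{3-74}, and the explicit form \eqref{modcont} of $\psi$ wherever a numerical value is needed.
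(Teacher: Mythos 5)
Your proposal is correct and follows essentially the same route as the paper: the routine properties are obtained by the substitution $s=4\lambda^{-1}t$, and \eqref{1-110}, \eqref{1'-110} come from the integral representation $\widehat{\vartheta}(\tau)-1=\int\vartheta(t)(\cos\tau t-1)\,dt$ together with the elementary quadratic bound on $1-\cos$, exactly as in the paper's proof (which is in fact the display \eqref{n1}). Your remark that the correct scaling is $\widehat{\vartheta}(\tau)=\widehat{\psi}(\lambda\tau/4)$ (the statement's $\widehat{\psi}(\tau/4)$ misses a $\lambda$) is consistent with the paper's own proof and with \eqref{3'-75}, and your use of $1-\cos\theta\leq\theta^{2}/2$ even removes the restriction $|\lambda\tau/4|\leq\pi/2$, a harmless strengthening.
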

\begin{proof}
We limit ourselves to prove property \eqref{1-110} and \eqref{1'-110}, since the other properties are immediate consequences  of \eqref{2-74}, \eqref{3-74} and \eqref{1-75}.
We have

\begin{equation}
\label{n1}
\left\vert \widehat{\vartheta}(\tau)-1\right\vert\leq \int^1_{-1} \psi(s)\left(1-\cos\left(\frac{\lambda s \tau}{4}\right)\right)ds.
\end{equation}
Now, if $s\in [-1,1]$ and $\left\vert\frac{\lambda\tau}{4} \right\vert\leq\frac{\pi}{2}$ then
\begin{equation*}
1-\cos\left(\frac{\lambda s \tau}{4}\right)\leq \left(\frac{\lambda\tau}{4}\right)^2 .
\end{equation*}
Hence by \eqref{n1} we get \eqref{1-110}. Finally \eqref{1'-110} is an immediate consequence of \eqref{1-110}
\end{proof}

As usual, if $f,g\in L^1(\mathbb{R})$, we denote by $(f\ast g)(t):=\int_{\mathbb{R}}f(t-s)g(s)ds$. Moreover we denote by $f^{\ast (k)}:=f\ast f^{\ast (k-1)}$, for $k\geq 2$, where $f^{\ast (1)}:=f$.

Let us define
\begin{equation}
\label{2-75}
\vartheta_k(t):=\left(k\vartheta(kt)\right)^{\ast (k)} \mbox{, for every } k\in\mathbb{N}.
\end{equation}
Notice that $\vartheta_k\geq 0$, $\mbox{supp }\vartheta_k\subset\left[-\frac{\lambda}{4},\frac{\lambda}{4}\right]$, $\int_{\mathbb{R}}\vartheta_k(t)dt=1$, for every $k\in\mathbb{N}$ and
\begin{equation}
\label{fourierthetak}
\widehat{\vartheta}_k(\tau)=\left(\widehat{\vartheta}(k^{-1} \tau)\right)^k \mbox{, for every } k\in\mathbb{N}, \tau\in\mathbb{R}.
\end{equation}
Moreover, by \eqref{1'-110} we have

\begin{equation}
\label{limit}
\lim \limits_{k\rightarrow +\infty} \widehat{\vartheta}_k(\tau)=1 \mbox{, for every } \tau\in\mathbb{R}.
\end{equation}

For any number $\mu\in(0,1]$ and any $k\in\mathbb{N}$ let us set

\begin{equation}
\label{4-75}
\varphi_{\mu,k}=\left(\vartheta_k\ast \varphi_{\mu}\right),
\end{equation}
where

\begin{equation}
\label{4'-75}
\varphi_{\mu}(t)=\mu^{-1}\vartheta\left(\mu^{-1}t\right), \mbox{ for every } t\in\mathbb{R}.
\end{equation}

We have $\mbox{supp }\varphi_{\mu,k}\subset\left[-\frac{\lambda(\mu+1)}{4},\frac{\lambda(\mu+1)}{4}\right]$, $\varphi_{\mu,k}\geq 0$ and $\int_{\mathbb{R}}\varphi_{\mu,k}(t)dt=1$.

Now, let us define the following slight different form of the Boman transformation of $\widetilde{u}(x,\cdot)$, \cite{Bo},
\begin{equation}
\label{2-76}
\widetilde{u}_{\mu,k}(x)=\int_{\mathbb{R}}\widetilde{u}(x,t)\varphi_{\mu,k}(t)dt \mbox{, for } x\in B_2.
\end{equation}
\begin{prop}\label{4-97}
If $k\in\mathbb{N}$ and $\mu=k^{-1/6}$ then the following inequality holds true
\begin{equation}
\label{2'-97}
\left\Vert u(\cdot,0)-\widetilde{u}_{\mu,k} \right\Vert_{L^2 \left(B_{1}\right)}\leq C H k^{-1/6},
\end{equation}
where C depends on $\lambda$ only.
\end{prop}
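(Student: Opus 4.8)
The plan is to estimate the difference $u(\cdot,0)-\widetilde{u}_{\mu,k}$ using the spectral representation \eqref{3-71} of $\widetilde{u}$, together with the coefficient bound \eqref{1-72} and the Fourier-side estimates for $\varphi_{\mu,k}$ collected in Proposition \ref{proptheta}. First I would observe that, by \eqref{unic}, on the ball $B_1$ the function $u(\cdot,0)=u_+(\cdot,0)$ coincides with $\widetilde{u}(\cdot,0)=\widetilde{u}_0$, so it suffices to bound $\|\widetilde{u}_0-\widetilde{u}_{\mu,k}\|_{L^2(B_1)}$, in fact even $\|\widetilde{u}_0-\widetilde{u}_{\mu,k}\|_{L^2(B_2)}$. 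Inserting \eqref{3-71} into \eqref{2-76} and using that $\varphi_{\mu,k}$ is even with total mass $1$, one gets
\begin{equation*}
\widetilde{u}_{\mu,k}(x)=\sum_{j=1}^\infty \alpha_j e_j(x)\,\widehat{\varphi_{\mu,k}}\big(\sqrt{\lambda_j}\big),
\qquad
\widetilde{u}_0(x)-\widetilde{u}_{\mu,k}(x)=\sum_{j=1}^\infty \alpha_j e_j(x)\Big(1-\widehat{\varphi_{\mu,k}}\big(\sqrt{\lambda_j}\big)\Big),
\end{equation*}
where $\widehat{\varphi_{\mu,k}}(\tau)=\widehat{\vartheta_k}(\tau)\,\widehat{\vartheta}(\mu\tau)$ by \eqref{4-75}, \eqref{4'-75} and \eqref{fourierthetak}. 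Since the $e_j$ are orthogonal in $L^2(B_2;q\,dx)$ and $q\geq\lambda$, the $L^2(B_2)$ norm squared is controlled by $\lambda^{-1}\sum_j \alpha_j^2\,|1-\widehat{\varphi_{\mu,k}}(\sqrt{\lambda_j})|^2$.

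The key point is then a pointwise bound $|1-\widehat{\varphi_{\mu,k}}(\tau)|\leq C(\lambda)\,\big(\mu^2\tau^2 + \text{something small}\big)^{?}$ that, when combined with $\sum_j(1+\lambda_j^2)\alpha_j^2\leq CH^2$ from \eqref{1-72}, yields $CHk^{-1/6}$ after choosing $\mu=k^{-1/6}$. I would split $1-\widehat{\varphi_{\mu,k}}(\tau) = \big(1-\widehat{\vartheta}(\mu\tau)\big) + \widehat{\vartheta}(\mu\tau)\big(1-\widehat{\vartheta_k}(\tau)\big)$ and treat the two terms separately, splitting further according to the size of $\tau$. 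For the first term: when $|\mu\tau|\leq 2$ (say), \eqref{1-110} gives $|1-\widehat{\vartheta}(\mu\tau)|\leq (\mu\tau/4)^2\leq C\mu^2\tau^2$, which against $\lambda_j=\tau^2$ and $\sum\lambda_j^2\alpha_j^2\leq CH^2$ contributes $C\mu^2 H = CHk^{-1/3}$; when $|\mu\tau|>2$, one uses $|\widehat{\vartheta}(\mu\tau)|\leq 16\pi^2(\mu\tau\lambda)^{-2}$ from \eqref{3'-75} so $|1-\widehat{\vartheta}(\mu\tau)|\leq 1 + C(\mu\tau\lambda)^{-2}\leq C(\lambda)$, but here $\tau^2\geq 4\mu^{-2}$, so $\alpha_j^2\leq \lambda_j^{-2}(\lambda_j^2\alpha_j^2)\leq \mu^4 C \lambda_j^2\alpha_j^2/16$, again giving $C\mu^2 H$. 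The first term is therefore $O(Hk^{-1/3})$, comfortably within the claimed bound.

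The main obstacle will be the second term, $\widehat{\vartheta}(\mu\tau)(1-\widehat{\vartheta_k}(\tau))$, because $\widehat{\vartheta_k}(\tau)=(\widehat{\vartheta}(\tau/k))^k$ is a $k$-fold product and one must quantify how fast it converges to $1$; this is where the exponent $1/6$ is forced. For $|\tau|\leq k/\sqrt2$ one has $|\tau/k|$ in the range where \eqref{1'-110} gives $\widehat{\vartheta}(\tau/k)\geq 1/2$ and \eqref{1-110} gives $0\leq 1-\widehat{\vartheta}(\tau/k)\leq (\lambda\tau/(4k))^2$; then via $1-a^k\leq k(1-a)$ for $a\in[0,1]$ one gets $1-\widehat{\vartheta_k}(\tau)\leq k(\lambda\tau/(4k))^2 = C\tau^2/k$. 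Pairing this with $\sum_j\lambda_j^2\alpha_j^2\leq CH^2$ is not directly enough (it would need $\lambda_j\le Ck$), so one splits at $\tau^2 \sim k^{1/3}$: for $\lambda_j=\tau^2\leq k^{1/3}$ use $|1-\widehat{\vartheta_k}|\leq C\tau^2/k\leq Ck^{-2/3}$ directly on $\sum\alpha_j^2\leq CH^2$, giving $CHk^{-1/3}$; for $\lambda_j=\tau^2 > k^{1/3}$ (and $\tau\le k/\sqrt2$) bound $|1-\widehat{\vartheta_k}|\leq C$ and use $\alpha_j^2\leq \lambda_j^{-2}\lambda_j^2\alpha_j^2 \leq k^{-2/3}\lambda_j^2\alpha_j^2$ (since $\lambda_j^{-1}<k^{-1/3}$, so $\lambda_j^{-2}<k^{-2/3}$... wait, need $\lambda_j^{-1}\le k^{-1/6}$, i.e. $\lambda_j\ge k^{1/6}$, which holds), giving $CHk^{-1/3}$ again; finally for $|\tau|>k/\sqrt2$, i.e. $\lambda_j>k^2/2$, bound $|1-\widehat{\vartheta_k}|\leq 1+|\widehat{\vartheta}|\cdot|\widehat{\vartheta_k}|\leq C$ and use $\alpha_j^2\leq \lambda_j^{-2}\lambda_j^2\alpha_j^2\leq (2/k^2)^{?}\cdots$ which is far smaller. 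Collecting all pieces, $\|\widetilde{u}_0-\widetilde{u}_{\mu,k}\|_{L^2(B_2)}\leq C(\lambda)Hk^{-1/6}$ with $\mu=k^{-1/6}$, and then restriction to $B_1$ and \eqref{unic} finish the proof. I would double-check at the writing stage that every split is dominated by the $k^{-1/6}$ rate and that the bookkeeping of exponents ($\mu^2=k^{-1/3}$ against the $k$ in the product) is exactly balanced by the choice $\mu=k^{-1/6}$; getting the sharp power is the delicate part, everything else is routine Plancherel-type estimation.
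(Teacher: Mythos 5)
Your plan is correct in substance, but it follows a genuinely different route from the paper's proof. The paper never expands the error in the eigenbasis: it writes $u(\cdot,0)-\widetilde u_{\mu,k}$ as the sum of (i) the error of mollifying $\widetilde u$ in time with $\varphi_\mu$, estimated in physical space through the energy inequality and $\partial_t\widetilde u(\cdot,0)=0$, giving $I_1\le CH\mu$, and (ii) the effect of replacing $\varphi_\mu$ by $\varphi_{\mu,k}$, reduced to $\left\Vert\varphi_\mu-\varphi_{\mu,k}\right\Vert_{L^2(\mathbb R)}$ and estimated by Parseval in the $t$-variable with an optimized frequency cut-off (see \eqref{1-102}--\eqref{2-105}), giving $I_2\le CH\left(k^{3/5}\mu^2\right)^{-5/8}$; the exponent $1/6$ is exactly what balances $\mu$ against $k^{-3/8}\mu^{-5/4}$. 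You instead work entirely on the spectral side in $x$, writing $\widetilde u_{\mu,k}=\sum_j\alpha_j\widehat\varphi_{\mu,k}(\sqrt{\lambda_j})e_j$ (the same representation the paper uses later for $v_k(\cdot,0)$), bounding $1-\widehat\varphi_{\mu,k}$ pointwise via \eqref{1-110}, \eqref{1'-110}, \eqref{3'-75}, \eqref{fourierthetak} and $1-a^k\le k(1-a)$, and pairing with \eqref{1-72}. This works, and in fact your bookkeeping errs only in the harmless direction: since $\left\vert 1-\widehat\vartheta(s)\right\vert\le(\lambda s/4)^2$ for \emph{all} $s$ (for $\left\vert\lambda s/4\right\vert>\pi/2$ the right-hand side already exceeds $2$), the first piece yields $C\mu^2H=CHk^{-1/3}$, and for $|\tau|\lesssim k$ the bound $0\le 1-\widehat\vartheta_k(\tau)\le\lambda^2\tau^2/(16k)$ pairs \emph{directly} with $\sum_j\lambda_j^2\alpha_j^2\le CH^2$ to give $CHk^{-1}$, so no splitting at $\tau^2\sim k^{1/3}$ is needed (your split, summed correctly, gives $k^{-2/3}+k^{-1/3}$, not $k^{-1/3}$ twice — still fine). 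Hence your route proves the stronger rate $CHk^{-1/3}$, because the spectral computation automatically exploits the evenness of $\widetilde u$ in $t$ (second-order accuracy of the mollifier), whereas the paper's $I_1$ uses only a first-order energy bound. Two small remarks: the identification $u(\cdot,0)=\widetilde u_0$ on $B_1$ holds by the very definition of the extension $\widetilde u_0$, so \eqref{unic} is not needed at $t=0$ (it is needed elsewhere, e.g.\ for \eqref{1-82}); and your proof relies on the $\lambda_j^2$-weighted part of \eqref{1-72}, whose constant involves $\mbox{div}\left(A\nabla_x\widetilde u_0\right)$ and hence depends on $\lambda$ \emph{and} $\Lambda$, so you obtain $C=C(\lambda,\Lambda)$ rather than the $C=C(\lambda)$ claimed in the statement — harmless for the later use in Theorem \ref{5-115}, whose constants depend on $\lambda,\Lambda$ anyway, but worth noting as the one thing the paper's time-domain argument buys that yours does not.
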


\begin{proof}

Let $\mu\in (0,1]$. By applying the triangle inequality and taking into account \eqref{1iii-73} and \eqref{4'-75} we have
\begin{gather}\label{1-100}
\left \Vert u(\cdot,0)-\widetilde{u}_{\mu ,k}(\cdot)\right \Vert_{L^2 \left(B_1\right)}\leq \\ \nonumber
\leq\left( \int_{B_1}dx\int_{-\lambda \mu /4}^{\lambda \mu /4}\left \vert u(x,0)-%
\widetilde{u}(x,t)\right \vert ^{2}\varphi _{\mu }(t)dt\right) ^{1/2}+\\ \nonumber
+\left(\int_{B_1}dx
\int_{-\lambda \left( \mu +1\right) /4}^{\lambda \left( \mu +1\right)
/4}\left \vert \widetilde{u}(x,t)\right \vert ^{2}dt\right) ^{1/2}\left \Vert
\varphi _{\mu }-\varphi _{\mu ,k}\right \Vert _{L^{2}\left( \mathbb{R}\right)
}:=I_1+I_2.
\end{gather}
In order to estimate from above $I_1$ we observe that by the energy inequality, \eqref{2-70} and by taking into account that $\partial_t \widetilde{u}(x,0)=0$, we have

\begin{eqnarray*}
 \int_{B_2}\left \vert \partial_t \widetilde{u}(x,t)
\right \vert ^{2}dx\leq\int_{B_2}\left(\left \vert \partial_t \widetilde{u}(x,t)
\right \vert ^{2}+\left \vert \nabla_x \widetilde{u}(x,t)
\right \vert ^{2}\right)dx \leq \\
\leq \lambda^{-2}\int_{B_2}\left(\left \vert \partial_t \widetilde{u}(x,0)
\right \vert ^{2}+\left \vert \nabla_x \widetilde{u}(x,0)
\right \vert ^{2}\right)dx\leq CH^2,
\end{eqnarray*}
where $C$ depends on $\lambda$ only.
Therefore

\begin{gather*}
I^2_1\leq 2\int_{B_{1}}dx\left \vert \int_{0}^{\lambda \mu /4}\partial _{\eta }
\widetilde{u}(x,\eta )d\eta \right \vert ^{2}\leq \frac{\lambda \mu }{2}
\int_{B_{1}}dx\int_{0}^{\lambda \mu /4}\left \vert \partial _{\eta }
\widetilde{u}(x,\eta )\right \vert ^{2}d\eta \leq CH^2\mu ^{2}.
\end{gather*}
Hence
\begin{equation}\label{1-101}
I_1\leq CH\mu,
\end{equation}
where $C$ depends on $\lambda$ only.

Concerning $I_2$, first we observe that by using Poincar\'{e} inequality, energy inequality and \eqref{2-70} we have
\begin{gather}\label{1-107}
\int_{-\lambda \left( \mu +1\right) /4}^{\lambda \left( \mu +1\right)
/4}dt\int_{B_1}\left \vert \widetilde{u}(x,t)\right \vert ^{2}dx\leq\int_{-\lambda/2 }^{\lambda/2}dt\int_{B_2}\left \vert \widetilde{u}(x,t)\right \vert ^{2}dx\leq\\ \nonumber
\leq C\int_{-\lambda/2 }^{\lambda/2}dt\int_{B_2}\left \vert \nabla_x\widetilde{u}(x,t)\right \vert ^{2}dx\leq CH^2,
\end{gather}
where $C$ depends on $\lambda$ only.

In order to estimate from above $\left \Vert\varphi _{\mu }-\varphi _{\mu ,k}\right \Vert _{L^{2}\left( \mathbb{R}\right)}$
we recall that $\widehat{\varphi}_\mu(\tau)=\widehat{\vartheta}(\mu\tau)$ and $\widehat{\varphi}_{\mu,k}(\tau)=\widehat{\vartheta}(\mu\tau)\left(\widehat{\vartheta}(k^{-1}\tau)\right)^k$, hence the Parseval identity and a change of variable give

\begin{equation}\label{1-102}
2\pi\left \Vert\varphi _{\mu }-\varphi _{\mu ,k}\right \Vert _{L^{2}\left( \mathbb{R}\right)}^2=\frac{1}{\mu}\int_{\mathbb{R}}\left\vert \left(\widehat{\vartheta}((\mu k)^{-1}\tau)\right)^k-1\right\vert^2 \left\vert\widehat{\vartheta}(\tau)\right\vert^2 d\tau.
\end{equation}
By \eqref{3'-75}, \eqref{1-110} and \eqref{1'-110} and by using the elementary inequalities $1-e^{-z}\leq z$, for every $z\in \mathbb{R}$, and $\log s\leq s-1$, for every $s>0$,  we have, whenever $\left\vert\frac{\lambda\tau}{4\mu k}\right\vert\leq\frac{1}{\sqrt{2}}$,

\begin{equation}\label{4-104}
0\leq 1-\left(\widehat{\vartheta}((\mu k)^{-1}\tau)\right)^k=1-e^{k\log \widehat{\vartheta}((\mu k)^{-1}\tau)}\leq\frac{\lambda^2\tau^2}{8\mu^2k} .
\end{equation}

Now let $\delta\in(0,1]$ be a number that we shall choose later and denote by $\beta=\frac{4\mu k}{\sqrt{2}\lambda}\delta$. By \eqref{1-102}, \eqref{3'-75} and \eqref{4-104} we have

\begin{gather}\label{2-105}
2\pi\left \Vert\varphi _{\mu }-\varphi _{\mu ,k}\right \Vert _{L^{2}\left( \mathbb{R}\right)}^2=\frac{1}{\mu}\int_{|\tau|\leq\beta}\left\vert \left(\widehat{\vartheta}((\mu k)^{-1}\tau)\right)^k-1\right\vert^2 \left\vert\widehat{\vartheta}(\tau)\right\vert^2 d\tau+
\\ \nonumber
+\frac{1}{\mu}\int_{|\tau|\geq\beta}\left\vert \left(\widehat{\vartheta}((\mu k)^{-1}\tau)\right)^k-1\right\vert^2 \left\vert\widehat{\vartheta}(\tau)\right\vert^2 d\tau\leq \\ \nonumber
\leq\frac{1}{\mu}\int_{|\tau|\leq\beta}\left(\frac{\lambda^2\tau^2}{8\mu^2 k}\right)^2 d\tau+\frac{1}{\mu}\int_{|\tau|>\beta}\left(\frac{32\pi^2}{\lambda^2\tau^2}\right)^2 d\tau\leq C\left(k^3\delta^5+\frac{1}{\delta^3\mu^4 k^3}\right),
\end{gather}
where $C$ depends on $\lambda$ only.
If $\mu^2 k^{3/5}\geq 1$, we choose $\delta=(\mu^2 k^3)^{-1/4}$ and by \eqref{2-105} we have

\begin{equation}\label{1-114}
\left \Vert\varphi _{\mu }-\varphi _{\mu ,k}\right \Vert _{L^{2}\left( \mathbb{R}\right)}\leq C \left(k^{3/5}\mu^{2}\right)^{-5/8},
\end{equation}
where $C$ depends on $\lambda$ only.
Hence recalling \eqref{1-107} we have
\begin{gather}\label{I2}
I_2 \leq C H\left(k^{3/5}\mu^{2}\right)^{-5/8}.
\end{gather}

By \eqref{1-100}, \eqref{1-101} and \eqref{1-101} we obtain

\begin{equation}
\label{1-114}
\left\Vert u(\cdot,0)-\widetilde{u}_{\mu,k} \right\Vert_{L^2 \left(B_{1}\right)}\leq C H\left(\mu+\left(k^{3/5}\mu^{2}\right)^{-5/8}\right) .
\end{equation}

Now, if $\mu=k^{-\frac{1}{6}}$, $k\geq 1$ then \eqref{1-114} implies \eqref{2'-97}.
\end{proof}

From now on we fix $\overline{\mu}:=k^{-\frac{1}{6}}$ for $k\geq 1$ and we set

\begin{equation}
\label{u-k}
\widetilde{u}_k:=\widetilde{u}_{\overline{\mu},k}.
\end{equation}

Let us introduce now, for every $k\in \mathbb{N}$ an even function $g_k\in C^{1,1}(\mathbb{R})$ such that if $|z|\leq k$ then we have $g_k(z)=\cosh z$, if  $|z|\geq 2k$ then we have $g_k(z)=\cosh 2k$ and such that it satisfies the condition

\begin{equation}
\label{2-80}
\left\vert g_k(z) \right\vert+\left\vert g^{\prime}_k(z) \right\vert+\left\vert g^{\prime\prime}_k(z) \right\vert\leq ce^{2k} \mbox{, for every } z\in\mathbb{R},
\end{equation}
where $c$ is an absolute constant.

The following proposition holds true.

\begin{prop}\label{2-81prop}
Let
\begin{equation}
\label{3-80}
v_{k}(x,y):=\sum_{j=1}^{\infty}\alpha_j \widehat{\varphi}_{\overline{\mu},k}\left(\sqrt{\lambda_j}\right)g_k\left(y\sqrt{\lambda_j}\right)e_j(x) \mbox{ , for  } (x,y)\in B_2\times\mathbb{R}.
\end{equation}
We have that $v_{k}(\cdot,y)$ belongs to $H^1\left(B_2\right)$ for every $y\in \mathbb{R}$, $v_{k}(x,y)$ is an even function with respect to $y$ and it satisfies
\begin{equation}
\label{4-5-6-81}
\left\{\begin{array}{ll}
q(x)\partial^2_{y}v_{k}+\mbox{div}\left(A(x)\nabla_x v_{k}\right)=f_{k}(x,y), \quad \hbox{in } B_2\times \mathbb{R},\\[2mm]
v_{k}(\cdot,0)=\widetilde{u}_{k},\quad \hbox{in } B_2.
\end{array}\right.
\end{equation}
where
\begin{equation}
\label{1-81}
f_{k}(x,y)=\sum_{j=1}^{\infty}\lambda_j\alpha_j \widehat{\varphi}_{\overline{\mu},k}\left(\sqrt{\lambda_j}\right)\left(g^{\prime\prime}_k\left(y\sqrt{\lambda_j}\right)-
g_k\left(y\sqrt{\lambda_j}\right)\right)e_j(x).
\end{equation}
Moreover we have

\begin{equation}
\label{3-81}
\sum_{j=0}^{2}\|\partial^{j}_yv_{k}(\cdot,y)\|_{H^{2-j}\left(B_2\right)}\leq CH e^{2k} \mbox{, for every  } y\in \mathbb{R},
\end{equation}

\begin{equation}
\label{2-81}
\|f_{k}(\cdot,y)\|_{L^2\left(B_2\right)}\leq CH e^{2k}\min\left\{1,\left(4\pi\lambda^{-1}|y|\right)^{2k}\right\} \mbox{, for every  } y\in \mathbb{R},
\end{equation}
where $C$ depends on $\lambda$ and $\Lambda$ only, and

\begin{equation}
\label{1-82}
\|v_{k}(\cdot,0)\|_{L^2\left(B_{r_0}\right)}\leq \varepsilon.
\end{equation}
\end{prop}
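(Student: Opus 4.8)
The plan is to verify each assertion of Proposition~\ref{2-81prop} by working directly with the series \eqref{3-80}, using the spectral bound \eqref{1-72} from Proposition~\ref{pag71} together with the decay of $\widehat{\varphi}_{\overline{\mu},k}$ coming from \eqref{3'-75} and \eqref{fourierthetak}. First I would establish convergence of \eqref{3-80} in $H^1(B_2)$ locally uniformly in $y$: on a compact $y$-interval $|y|\le M$ one has $|g_k(y\sqrt{\lambda_j})|\le \cosh(2k)\le ce^{2k}$ and $|g_k'|,|g_k''|$ are bounded the same way by \eqref{2-80}, while $\|e_j\|_{H^1(B_2)}^2$ is comparable to $1+\lambda_j$ (from \eqref{2-71} and the equation \eqref{1-71}, as in the Poincar\'e computation already displayed), so the $H^1$-norm of the $j$-th term is controlled by $ce^{2k}|\alpha_j|\,|\widehat{\varphi}_{\overline{\mu},k}(\sqrt{\lambda_j})|(1+\lambda_j)^{1/2}$; since $|\widehat{\varphi}_{\overline{\mu},k}|\le 1$, the series $\sum(1+\lambda_j)^{1/2}|\alpha_j|\cdot(1+\lambda_j)^{1/2}=\sum(1+\lambda_j)|\alpha_j|$ converges by Cauchy--Schwarz against \eqref{1-72}. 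The evenness in $y$ is immediate since $g_k$ is even. That $v_k(\cdot,0)=\widetilde{u}_k$ follows because $g_k(0)=\cosh 0=1$, so $v_k(x,0)=\sum_j\alpha_j\widehat{\varphi}_{\overline{\mu},k}(\sqrt{\lambda_j})e_j(x)$, which by \eqref{3-71}, \eqref{2-76} and the fact that convolution in $t$ acts on $\cos\sqrt{\lambda_j}t$ by multiplication by $\widehat{\varphi}_{\overline{\mu},k}(\sqrt{\lambda_j})$ equals $\widetilde{u}_k$.

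Next I would check the PDE \eqref{4-5-6-81}. Differentiating \eqref{3-80} termwise in $y$ twice and applying $\mathrm{div}(A\nabla_x\cdot)$ termwise, each eigenfunction contributes
$q(x)\partial_y^2\big(g_k(y\sqrt{\lambda_j})\big)e_j(x)+\mathrm{div}(A\nabla_x e_j)(x)g_k(y\sqrt{\lambda_j})
=\lambda_j g_k''(y\sqrt{\lambda_j})q(x)e_j(x)-\lambda_j q(x)e_j(x)g_k(y\sqrt{\lambda_j})$,
using $\mathrm{div}(A\nabla_x e_j)=-\lambda_j q e_j$ from \eqref{1-71}; multiplying out the common factor $\widehat{\varphi}_{\overline{\mu},k}(\sqrt{\lambda_j})\alpha_j$ and summing gives exactly $q(x)$ times nothing extra---so $f_k$ is as in \eqref{1-81} after absorbing the $q$, i.e. the identity holds in the weak sense once termwise differentiation is justified, which again follows from the same absolutely convergent majorant (now with an extra factor $\lambda_j$, still summable against \eqref{1-72} via $\sum(1+\lambda_j)|\alpha_j|\le(\sum(1+\lambda_j)^2\alpha_j^2)^{1/2}(\sum 1)^{?}$---here one must be slightly careful: only $\sum(1+\lambda_j^2)\alpha_j^2<\infty$ is available, so to sum $\sum\lambda_j|\alpha_j|\|e_j\|$ one exploits the decay of $\widehat{\varphi}_{\overline{\mu},k}(\sqrt{\lambda_j})\le 16\pi^2(\overline{\mu}\sqrt{\lambda_j}\,\lambda/4)^{-2}=O(\lambda_j^{-1})$ from \eqref{3'-75}, which kills the extra power of $\lambda_j$). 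This same observation is the key to the quantitative bounds.

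For \eqref{3-81} I would bound $\|\partial_y^jv_k(\cdot,y)\|_{H^{2-j}(B_2)}$ term by term: the $y$-factors $g_k,g_k',g_k''$ are all $\le ce^{2k}$ by \eqref{2-80}, and $\|e_j\|_{H^{2-j}(B_2)}$ is controlled by $(1+\lambda_j)^{(2-j)/2}\lesssim(1+\lambda_j)$ (using elliptic regularity for \eqref{1-71} with Lipschitz coefficients, or directly that $H^2$-norm of $e_j$ is $\lesssim 1+\lambda_j$); the scalar factor is $|\alpha_j|\,|\widehat{\varphi}_{\overline{\mu},k}(\sqrt{\lambda_j})|$, and again $|\widehat{\varphi}_{\overline{\mu},k}(\sqrt{\lambda_j})|\lesssim\min\{1,\lambda_j^{-1}\}$ gives $\sum_j|\alpha_j|(1+\lambda_j)\min\{1,\lambda_j^{-1}\}\lesssim\sum_j|\alpha_j|\lesssim(\sum_j(1+\lambda_j^2)\alpha_j^2)^{1/2}(\sum_j(1+\lambda_j^2)^{-1})^{1/2}\lesssim H$, using Weyl asymptotics $\lambda_j\sim j^{2/n}$ so $\sum\lambda_j^{-2}<\infty$ for $n\le 3$---and for general $n$ one instead keeps more decay of $\widehat{\varphi}_{\overline{\mu},k}$ (it decays faster than any fixed power up to $\lambda_j\sim (\overline{\mu}k)^2$, see \eqref{fourierthetak}) or simply notes $\widehat\varphi_{\overline\mu,k}(\sqrt{\lambda_j})$ is bounded and uses $(1+\lambda_j)^2\alpha_j^2$ summability with the extra decay; this is exactly the routine point I would not belabor. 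Estimate \eqref{2-81} is the heart: from \eqref{1-81}, $g_k''(z)-g_k(z)=0$ for $|z|\le k$ since $g_k(z)=\cosh z$ there, so only indices with $\sqrt{\lambda_j}|y|\ge k$ contribute; on those, $|\widehat{\varphi}_{\overline{\mu},k}(\sqrt{\lambda_j})|\le|\widehat{\vartheta}(\overline{\mu}\sqrt{\lambda_j})|\cdot|\widehat{\vartheta}(k^{-1}\sqrt{\lambda_j})|^k\le(16\pi^2(\overline{\mu}\sqrt{\lambda_j}\lambda/4)^{-2})\cdot(16\pi^2(k^{-1}\sqrt{\lambda_j}\lambda/4)^{-2})^k$, and using $\sqrt{\lambda_j}\ge k/|y|$ the factor $(k^{-1}\sqrt{\lambda_j})^{-2k}\le(|y|)^{2k}$ so the tail is $\le (C\lambda^{-1}\sqrt{\lambda_j})^{-2}(4\pi\lambda^{-1}|y|)^{2k}$ times the $|g_k''|+|g_k|\le ce^{2k}$ bound; combined with $\sum\lambda_j|\alpha_j|\lambda_j^{-1}\cdot\lambda_j^{1/2}\|e_j\|_{L^2}\lesssim\sum|\alpha_j|\lesssim H$ (the $\lambda_j$ from the coefficient in \eqref{1-81} cancels against the $\lambda_j^{-1}$ decay of $\widehat\varphi$, and $\|e_j\|_{L^2(B_2)}\le\lambda^{-1/2}$ by \eqref{2-71}) this yields $\|f_k(\cdot,y)\|_{L^2}\le CHe^{2k}(4\pi\lambda^{-1}|y|)^{2k}$; pairing with the trivial bound $\|f_k(\cdot,y)\|_{L^2}\le CHe^{2k}$ from \eqref{3-81}-type estimates gives the $\min$. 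Finally \eqref{1-82}: $v_k(\cdot,0)=\widetilde{u}_k=\widetilde{u}_{\overline{\mu},k}$, and on $B_{r_0}$ we use the Boman-transform representation \eqref{2-76} with \eqref{unic}: since $\mathrm{supp}\,\varphi_{\overline{\mu},k}\subset[-\lambda(\overline{\mu}+1)/4,\lambda(\overline{\mu}+1)/4]\subset(-\lambda/2,\lambda/2)$, for $|x|\le r_0\le 1$ the cone condition $|x|+\lambda^{-1}|t|<1$ in \eqref{unic} holds whenever $|t|<\lambda(1-r_0)$, which covers the support of $\varphi_{\overline{\mu},k}$ (as $\lambda(\overline{\mu}+1)/4<\lambda/2\le\lambda(1-r_0)$ once $r_0\le 1/2$; the general case $r_0\le\rho_0=1$ is handled by the same argument after noting $\widetilde u_{\overline\mu,k}(x)=\int u_+(x,t)\varphi_{\overline\mu,k}(t)dt$ there), so $\widetilde{u}_k(x)=\int_{\mathbb{R}}u_+(x,t)\varphi_{\overline{\mu},k}(t)dt$ for $|x|\le r_0$; then by Minkowski's integral inequality, $\|\widetilde{u}_k\|_{L^2(B_{r_0})}\le\int_{\mathbb{R}}\|u_+(\cdot,t)\|_{L^2(B_{r_0})}\varphi_{\overline{\mu},k}(t)\,dt\le\sup_{|t|<\lambda}\|u_+(\cdot,t)\|_{L^2(B_{r_0})}=\varepsilon$ by \eqref{4ii-65} (with $\rho_0=1$). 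The main obstacle is bookkeeping the two competing decays in $\widehat{\varphi}_{\overline{\mu},k}(\sqrt{\lambda_j})$---the fixed polynomial decay $\lambda_j^{-1}$ that makes all the series converge, and the superpolynomial $|y|^{2k}$-decay on the range $\sqrt{\lambda_j}\ge k/|y|$ that produces \eqref{2-81}---and making sure the cone-of-influence inclusion genuinely holds on the full support of $\varphi_{\overline{\mu},k}$ for all admissible $r_0$; everything else is term-by-term estimation against \eqref{1-72}.
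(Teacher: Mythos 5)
Your overall architecture matches the paper's (split $g_k''-g_k$ at $|y|\sqrt{\lambda_j}=k$ for \eqref{2-81}, $g_k(0)=1$ plus the convolution--multiplier identity for $v_k(\cdot,0)=\widetilde u_k$, Schwarz/Minkowski for \eqref{1-82}), but the way you sum the series contains a genuine gap. You repeatedly reduce everything to absolute ($\ell^1$) summability of the coefficients, e.g.\ $\sum_j|\alpha_j|(1+\lambda_j)|\widehat\varphi_{\overline\mu,k}(\sqrt{\lambda_j})|$ or $\sum_j|\alpha_j|$, and this is \emph{not} available from \eqref{1-72}: Cauchy--Schwarz against $\sum_j(1+\lambda_j^2)\alpha_j^2\leq CH^2$ requires $\sum_j(1+\lambda_j^2)^{-1}<\infty$, which by Weyl asymptotics holds only for $n\leq 3$, while the proposition is stated for all $n\geq 2$. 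Your two patches do not close this: (i) the decay $|\widehat\varphi_{\overline\mu,k}(\sqrt{\lambda_j})|\lesssim(\overline\mu^2\lambda_j)^{-1}$ from \eqref{3'-75} is not ``$O(\lambda_j^{-1})$'' with a constant depending only on $\lambda,\Lambda$, because $\overline\mu=k^{-1/6}$, so you pick up factors $k^{1/3}$ that destroy the uniform-in-$k$ constants claimed in \eqref{3-81} and \eqref{2-81}; (ii) ``keeping more decay'' of $(\widehat\vartheta(k^{-1}\tau))^k$ only helps for $\lambda_j\gg k^2$, and the low-frequency block $\{\lambda_j\lesssim k^2\}$ then contributes, again via Weyl, a factor growing with $k$ (of order $k^{n/2}$), so at best you would prove the estimates with $e^{2k}$ replaced by a larger, $k$-dependent factor, i.e.\ not the stated inequalities.

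The paper avoids $\ell^1$ summation altogether by exploiting orthogonality: the $e_j$ are orthonormal in $L^2(B_2;q\,dx)$ and orthogonal in the Dirichlet form $\int_{B_2}A\nabla\cdot\nabla$, so for a partial sum $V_{M,N}$ one gets $\|V_{M,N}(\cdot,y)\|^2_{H^1_0(B_2)}\leq ce^{4k}\sum_{N+1}^M\lambda_j\alpha_j^2$ and, via $\operatorname{div}(A\nabla_x V_{M,N})=-\sum\lambda_j\alpha_j\widehat\varphi_{\overline\mu,k}(\sqrt{\lambda_j})g_k(y\sqrt{\lambda_j})q\,e_j$ together with $L^2$ elliptic regularity, $\|D^2_xV_{M,N}\|^2_{L^2}\lesssim e^{4k}\sum_{N+1}^M\lambda_j^2\alpha_j^2$; likewise $\|f_k(\cdot,y)\|_{L^2}$ is bounded by the sup over $j$ of the scalar factor (which yields exactly $ce^{2k}\min\{1,(4\pi\lambda^{-1}|y|)^{2k}\}$, using \eqref{3'-75} without your extra $/4$ inside, which would give $16\pi$ rather than $4\pi$) times $\left(\sum_j\lambda_j^2\alpha_j^2\right)^{1/2}$. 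In this way only the $\ell^2$ bound \eqref{1-72} is ever used, the argument is dimension-free, and convergence of the series in $H^1$ and the termwise differentiation are justified through Cauchy sequences of partial sums rather than absolute convergence. You should rework your convergence and norm estimates along these lines; the remaining ingredients of your proposal (evenness, the identification $v_k(\cdot,0)=\widetilde u_k$, the splitting at $|y|\sqrt{\lambda_j}=k$, and the cone-of-influence check in \eqref{1-82}, where you are in fact more careful than the paper) are fine.
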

\begin{proof}
First of all observe that
\begin{equation}
\label{1-83}
\left|\widehat{\varphi}_{\overline{\mu},k}\left(\sqrt{\lambda_j}\right)\right|\leq \|\varphi_{\overline{\mu},k}\|_{L^1\left(\mathbb{R}\right)}=1.
\end{equation}
For the sake of brevity, in what follows we shall omit $k$ from $v_{k}$.

In order to prove that $v(\cdot,y)\in  H^2\left(B_2\right) \cap H^1\left(B_2\right)$ for $y\in \mathbb{R}$, let $M,N\in\mathbb{N}$ such that $M>N$ and let us denote by

\begin{equation}
\label{3-83}
V_{M,N}(x,y):=\sum_{j=N+1}^{M}\alpha_j \widehat{\varphi}_{\overline{\mu},k}\left(\sqrt{\lambda_j}\right)g_k\left(y\sqrt{\lambda_j}\right)e_j(x).
\end{equation}
By \eqref{2-80} and \eqref{1-83} we have, for every $y\in\mathbb{R}$,

\begin{gather*}
 \lambda\int_{B_2}\left\vert\nabla_x V_{M,N}(x,y)\right\vert^2 dx\leq \int_{B_2}A(x)\nabla_x V_{M,N}(x,y)\cdot\nabla_x V_{M,N}(x,y) dx=\\
=\sum_{j=N+1}^{M} \left(\int_{B_2}A(x)\nabla_x e_j(x)\cdot\nabla_x V_{M,N}(x,y) dx\right)\widehat{\varphi}_{\overline{\mu},k}\left(\sqrt{\lambda_j}\right)g_k\left(y\sqrt{\lambda_j}\right)\alpha_j=\\
=\sum_{j=N+1}^{M}\lambda_j\alpha^2_j\widehat{\varphi}^2_{\overline{\mu},k}\left(\sqrt{\lambda_j}\right) g^2_k\left(y\sqrt{\lambda_j}\right)\leq c e^{4k}\sum_{j=N+1}^{M}\lambda_j\alpha^2_j.
\end{gather*}
Therefore, since $V_{M,N}(\cdot,y)\in H^1_0\left(B_2\right)$ we have
\begin{equation}
\label{84}
 \|V_{M,N}(\cdot,y)\|^2_{H^1_0\left(B_2\right)}\leq c e^{4k}\sum_{j=N+1}^{M}\lambda_j\alpha^2_j \mbox{, for every } y\in \mathbb{R}.
\end{equation}
The inequality above and \eqref{1-72} gives
\begin{equation*}
 \|V_{M,N}(\cdot,y)\|_{H^1_0\left(B_2\right)}\rightarrow 0 \mbox{, as } M,N\rightarrow \infty \mbox{, for every } y\in \mathbb{R},
\end{equation*}
hence $v\in H^1_0\left(B_2\right)$.

In order to prove that $v\in H^2\left(B_2\right)$, first observe that by \eqref{2-80}, \eqref{1-83} \eqref{3-83} we have
\begin{equation*}
 \|\mbox{div}\left(A\nabla_x V_{M,N}\right)\|^2_{L^2\left(B_2\right)}\leq c\lambda^{-1} e^{4k}\sum_{j=N+1}^{M}\lambda^2_j\alpha^2_j \mbox{, for every } y\in \mathbb{R},
\end{equation*}
then by the above inequality and standard $L^2$ regularity estimate \cite{GT} we obtain
\begin{gather}
\label{2-85}
 \|D^2_x V_{M,N}(\cdot,y)\|^2_{L^2\left(B_2\right)}\leq \\ \nonumber
 \leq C \|\mbox{div}\left(A\nabla_x V_{M,N}\right)\|^2_{L^2\left(B_2\right)}\leq e^{4k}\sum_{j=N+1}^{M}\lambda^2_j\alpha^2_j \mbox{, for every } y\in \mathbb{R},
\end{gather}
where $C$ depends on $\lambda$ and $\Lambda$ only. Hence $v\in H^2\left(B_2\right)$. Moreover by \eqref{1-72}, \eqref{84} and \eqref{2-85} we have
 \begin{gather}
\label{2-86}
 \|v(\cdot,y)\|_{L^2\left(B_2\right)}+\|\nabla_x v(\cdot,y)\|_{L^2\left(B_2\right)}+\|D^2_x v(\cdot,y)\|_{L^2\left(B_2\right)}\leq \\ \nonumber \leq C H e^{2k} \mbox{, for every } y\in \mathbb{R},
\end{gather}
where $C$ depends on $\lambda$ and $\Lambda$ only.

Similarly we have $\partial_yv (\cdot,y) ,\partial^2_y v (\cdot,y), \partial_y \nabla_x v(\cdot,y)\in L^2\left(B_2\right)$ and
\begin{equation}
\label{86}
\sum_{j=1}^{2}\|\partial^j_y D^{2-j}_x v(\cdot,y)\|_{L^2\left(B_2\right)}\leq CH e^{2k} \mbox{, for every  } y\in \mathbb{R},
\end{equation}
where $C$ depends on $\lambda$ and $\Lambda$ only.
Inequality \eqref{86} and \eqref{2-86}, yields \eqref{3-81}.

By \eqref{3-80} we have immediately that the function $v$ is an even function and it satisfies \eqref{4-5-6-81}.
Concerning \eqref{2-81}, first observe that by the definition of $g_k$ we have that $g''_k (y\sqrt{\lambda_j})-g_k(y\sqrt{\lambda_j})=0$, for $|y|\sqrt{\lambda_j}\leq k$ and $\left\vert g''_k(y\sqrt{\lambda_j})-g_k(y\sqrt{\lambda_j}) \right\vert\leq ce^{2k}$, for $|y|\sqrt{\lambda_j}\geq k$. Hence, taking into account \eqref{3'-75} and \eqref{fourierthetak}, we have, for every $y\in\mathbb{R}$ and for every $k\in\mathbb{N}$,

\begin{gather}
\label{90}
\left\vert g''_k(y\sqrt{\lambda_j})-g_k(y\sqrt{\lambda_j}) \right\vert \left\vert \widehat{\varphi}_{\overline{\mu}, k}(\sqrt{\lambda_j}) \right\vert \leq \\ \nonumber
\leq c e^{2k}\left\vert \widehat{\vartheta}(k^{-1}\sqrt{\lambda_j}) \right\vert^k \chi_{\{y:|y|\sqrt{\lambda_j}\geq k\}}\leq
 \\ \nonumber
  \leq c e^{2k}\sup \left\{\left\vert \widehat{\vartheta}(k^{-1}\sqrt{\lambda_j}) \right\vert^k : |y|\sqrt{\lambda_j}\geq k\right\}\leq c e^{2k} \min\left\{1,\left(4\pi\lambda^{-1}|y|\right)^{2k}\right\}.
\end{gather}
By \eqref{1-81} and  \eqref{90} we have
\begin{equation*}
 \|f_{k}(\cdot,y)\|_{L^2\left(B_2\right)}\leq c e^{2k} \min\left\{1,\left(4\sqrt{2}\pi\lambda^{-1}|y|\right)^{2k}\right\}\left(\sum_{j=1}^{\infty}\lambda^2_j\alpha^2_j\right)^{1/2} \mbox{, for every } y\in \mathbb{R}.
\end{equation*}
By the above inequality and by \eqref{1-72} we obtain \eqref{2-81}.

Since $\|\varphi_{\overline{\mu},k}\|_{L^1\left(\mathbb{R}\right)}=1$, by Schwarz inequality and by \eqref{4ii-65}  and \eqref{2-76} we have

\begin{gather*}
\|v_k(\cdot,0)\|^2_{L^2\left(B_{r_0}\right)}=\int_{B_{r_0}} \left \vert \widetilde{u}_{k}(x)\right \vert ^{2}dx\leq\\
\leq \int_{-\lambda \left( \overline{\mu} +1\right) /4}^{\lambda \left( \overline{\mu} +1\right)
/4}\left(\int_{B_{r_0}}\left \vert u(x,t)\right \vert ^{2}dx\right)\varphi_{\overline{\mu},k}(t)dt\leq \varepsilon^2
\end{gather*}
and \eqref{1-82} follows.
\end{proof}

In what follows we shall denote by $\widetilde{B}_r$ the ball of $\mathbb{R}$ of radius $r$ centered at $0$.

In order to prove Proposition \ref{3-91} stated below we need the following theorem that has been proved in \cite[Theorem 1.10]{A-R-R-V}

\begin{theo}\label{stimacauchy}
Let $r$ be a positive number and let $w\in H^2\left(B_r\right)$ be a solution to the problem
\begin{equation}
\label{pbCauchy}
\left\{\begin{array}{ll}
q(x)\partial^2_{y}w(x,y)+\mbox{div}\left(A(x)\nabla_x w(x,y)\right)=0, \quad \hbox{ in } \widetilde{B}_r, \\[2mm]
\partial_{y}w(\cdot,0)=0 , \quad \hbox{in } B_r,
\end{array}\right.
\end{equation}
where $A$  satisfies \eqref{1-65} and $q$ satisfies \eqref{3-65}.

Then there exist $\beta\in (0,1)$ and $C\geq 1$ depending on $\lambda$ and $\Lambda$ only such that
\begin{equation}
\label{stimaCauchy}
\int_{\widetilde{B}_{r/4}}w^2dxdy\leq C\left(\int_{\widetilde{B}_{r}}w^2dxdy\right)^{1-\beta}\left(\int_{B_{r/2}}w^2(x,0)dx\right)^{\beta}.
\end{equation}
\end{theo}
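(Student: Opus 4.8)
The statement to be proved is Theorem \ref{stimacauchy}, a quantitative Cauchy-type estimate for solutions of the elliptic equation $q\partial_y^2 w+\divrg(A\nabla_x w)=0$ on a ball $\widetilde B_r\subset\R^{n+1}$ with vanishing Neumann-type trace $\partial_y w(\cdot,0)=0$. Since the statement is quoted verbatim from \cite[Theorem 1.10]{A-R-R-V}, the natural "proof" here is to reduce it to that reference; but let me sketch how one would prove it from scratch, since that is more informative.

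\textbf{Reduction to a pure three-sphere / propagation-of-smallness argument.} The plan is to exploit the evenness forced by the Cauchy condition $\partial_y w(\cdot,0)=0$. First I would reflect $w$ evenly across $\{y=0\}$: setting $\widetilde w(x,y)=w(x,y)$ for $y\ge 0$ and $\widetilde w(x,y)=w(x,-y)$ for $y<0$, the condition $\partial_y w(\cdot,0)=0$ together with $w(\cdot,0)\in H^2$ guarantees that $\widetilde w\in H^2(\widetilde B_r)$ (the $C^1$-matching across $y=0$ holds, and the second derivatives match in $L^2$ because the equation determines $\partial_y^2 w$ on $\{y=0\}$ from the $x$-derivatives, which are even). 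Because the coefficients $q(x)$, $A(x)$ depend only on $x$, the reflected function $\widetilde w$ solves the same equation $q\partial_y^2\widetilde w+\divrg(A\nabla_x\widetilde w)=0$ in all of $\widetilde B_r$ in the weak sense. So the Cauchy problem is converted into an interior problem for a solution of a uniformly elliptic, Lipschitz-coefficient equation.

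\textbf{Applying the known quantitative unique continuation for the elliptic equation.} For such $\widetilde w$ one has the three-sphere inequality (equivalently the doubling / Lipschitz-propagation-of-smallness inequality) available from the elliptic Carleman estimate machinery — precisely the results collected in \cite{A-R-R-V}. The key point is that smallness of $\widetilde w$ on the "equatorial" disk $B_{r/2}\times\{0\}$ — a codimension-one slice — must be upgraded to smallness on a full $(n+1)$-dimensional ball $\widetilde B_{r/4}$. The standard device is: cover the slice $B_{r/2}\times\{0\}$ by small balls, use an interpolation/trace estimate to bound $\int_{\widetilde B_\sigma(x_0,0)}\widetilde w^2$ near each point of the slice by a product of a (small) power of $\int_{B_{r/2}}w^2(x,0)\,dx$ and a power of the global quantity $\int_{\widetilde B_r}\widetilde w^2$, and then chain three-sphere inequalities along paths from any interior point of $\widetilde B_{r/4}$ to the slice to propagate that smallness. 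Each link contributes a Hölder exponent $<1$, the chain has bounded length (depending only on $\lambda,\Lambda$, not on $w$), so the composed exponent $\beta\in(0,1)$ depends only on $\lambda,\Lambda$, and the constant $C$ likewise.

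\textbf{Main obstacle.} The delicate step is the trace/interpolation estimate that converts the codimension-one data $\int_{B_{r/2}}w^2(x,0)\,dx$ into a bound on a thin full-dimensional neighborhood of the slice: a naive trace inequality loses derivatives, so one must instead use a Caccioppoli-type estimate together with the fact that $\partial_y\widetilde w$ vanishes on the slice to control $\int w^2$ on a slab $B_{r/2}\times(-\delta,\delta)$ in terms of $\delta$ times the slice integral plus $\delta^2$ times an energy term bounded by the global $L^2$ norm (via interior estimates). Optimizing in $\delta$ then produces the interpolation inequality with a genuine power $\beta$. After that, the chaining of three-sphere inequalities is routine. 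Concretely, I would simply write: \emph{this is \cite[Theorem 1.10]{A-R-R-V}}, and refer the reader there for the details of the Carleman-estimate proof; the evenness reduction above explains why the Cauchy problem on $\widetilde B_r$ with $\partial_y w(\cdot,0)=0$ falls exactly within that framework.
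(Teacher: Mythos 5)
Your proposal matches the paper exactly: the paper offers no proof of this theorem at all, it simply quotes it as \cite[Theorem 1.10]{A-R-R-V}, which is precisely the reduction you make. Your supplementary sketch (evenness across $\{y=0\}$, smallness near the slice via the vanishing normal derivative, then propagation by three-sphere inequalities) is a reasonable outline of the underlying quantitative-unique-continuation machinery and does not conflict with the cited source — note only that $w$ is already defined on the whole ball $\widetilde{B}_r$ here, so the reflection step is not actually needed.
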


\bigskip

\begin{prop}\label{3-91}
Let $v_{k}$ be defined in \eqref{3-80} and let $r_0\leq \frac{\lambda}{8}$. Then we have
\begin{equation}
\label{1-91}
\|v_{k}\|_{L^2\left(\widetilde{B}_{r_0/4}\right)}\leq C\left(\varepsilon+H\left(C_0r_0\right)^{2k}\right)^{\beta}\left(He^{2k}+H\left(C_0r_0\right)^{2k}\right)^{1-\beta}.
\end{equation}
where $\beta\in (0,1)$, $C$ depend on $\lambda$ and $\Lambda$ only and $C_0=4\pi e \lambda^{-1}$.
\end{prop}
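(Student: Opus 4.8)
The plan is to reduce the estimate for $v_k$ to an application of the interior Cauchy stability estimate, Theorem~\ref{stimacauchy}, after splitting off the contribution of the inhomogeneity $f_k$. The function $v_k$ solves the elliptic equation $q\,\partial_y^2 v_k+\mathrm{div}(A\nabla_x v_k)=f_k$ in $B_2\times\mathbb{R}$ with $\partial_y v_k(\cdot,0)=0$ (since $v_k$ is even in $y$ and $H^2$ in a neighborhood of $y=0$, using $g_k'(0)=0$), so it is not itself a solution of the homogeneous equation and Theorem~\ref{stimacauchy} does not apply directly.

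First I would solve the auxiliary problem $q\,\partial_y^2 w+\mathrm{div}(A\nabla_x w)=f_k$ in the relevant cylinder with zero Cauchy data on $\{y=0\}$ (or rather estimate a particular solution $w_k$ of the inhomogeneous equation), using the $L^2$-bound on $f_k$ from \eqref{2-81}. The crucial point there is that, by \eqref{2-81}, $\|f_k(\cdot,y)\|_{L^2(B_2)}\le CHe^{2k}\min\{1,(4\pi\lambda^{-1}|y|)^{2k}\}$, so on the small ball $\widetilde B_{r_0}$ with $r_0\le \lambda/8$ one has $4\pi\lambda^{-1}|y|\le 4\pi\lambda^{-1}r_0\le \pi/2<1$ on the $x$-slices, and more to the point the factor $(4\pi\lambda^{-1}|y|)^{2k}$ integrated over $|y|\le r_0$ produces a bound of the form $H(C_0 r_0)^{2k}$ with $C_0=4\pi e\lambda^{-1}$ (the extra $e^{2k}$ from the prefactor is absorbed by replacing $4\pi\lambda^{-1}$ by $4\pi e\lambda^{-1}$, matching the stated constant). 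So the inhomogeneous part contributes at most $CH(C_0r_0)^{2k}$ to the $L^2$ norm on $\widetilde B_{r_0/4}$, and, writing $v_k=w_k+z_k$ where $z_k$ is the homogeneous remainder, $z_k$ solves \eqref{pbCauchy} on $\widetilde B_{r_0}$.

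Next I would apply Theorem~\ref{stimacauchy} to $z_k$ with $r=r_0$: this gives
\[
\int_{\widetilde B_{r_0/4}}z_k^2 \le C\Bigl(\int_{\widetilde B_{r_0}}z_k^2\Bigr)^{1-\beta}\Bigl(\int_{B_{r_0/2}}z_k^2(x,0)\,dx\Bigr)^{\beta}.
\]
For the first factor I would use the global bound \eqref{3-81}, which gives $\|z_k\|_{L^2(\widetilde B_{r_0})}\le \|v_k\|_{L^2(\widetilde B_{r_0})}+\|w_k\|\le C(He^{2k}+H(C_0r_0)^{2k})$. For the second, the trace on $y=0$: $z_k(\cdot,0)=v_k(\cdot,0)-w_k(\cdot,0)=\widetilde u_k-w_k(\cdot,0)$, and $\|\widetilde u_k\|_{L^2(B_{r_0})}=\|v_k(\cdot,0)\|_{L^2(B_{r_0})}\le\varepsilon$ by \eqref{1-82}, while $\|w_k(\cdot,0)\|_{L^2(B_{r_0/2})}\le CH(C_0r_0)^{2k}$ from the inhomogeneous estimate; hence the Cauchy-data factor is $\le C(\varepsilon+H(C_0r_0)^{2k})$. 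Substituting these two bounds into the interpolation inequality and adding back the $w_k$ contribution $CH(C_0r_0)^{2k}$ (which is itself dominated by the right-hand side since it is bounded by both factors) yields exactly \eqref{1-91}.

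The main obstacle I expect is the careful bookkeeping of the inhomogeneous term: one has to choose the auxiliary solution $w_k$ of $q\partial_y^2 w+\mathrm{div}(A\nabla_x w)=f_k$ so that it has controlled $L^2$ norm on $\widetilde B_{r_0}$ \emph{and} controlled Cauchy data on $\{y=0\}$, and to verify that the elliptic solvability/energy estimate really does produce the geometric factor $(C_0r_0)^{2k}$ rather than merely $(C r_0)^{2k}$ with a worse constant — this is where the replacement of $4\pi\lambda^{-1}$ by $4\pi e\lambda^{-1}=C_0$ enters, absorbing the $e^{2k}$ prefactor in \eqref{2-81} at the cost of the factor $e$ in the base. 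A secondary technical point is confirming that $v_k(\cdot,y)\in H^2$ with $\partial_y v_k(\cdot,0)=0$ and that the decomposition $v_k=w_k+z_k$ keeps $z_k$ in the class required by Theorem~\ref{stimacauchy} on the full ball $\widetilde B_{r_0}$; this follows from Proposition~\ref{2-81prop} and the regularity of $w_k$.
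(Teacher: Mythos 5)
Your overall strategy coincides with the paper's: split $v_k=w_k+z_k$, where $w_k$ is a particular solution of $q\,\partial_y^2 w+\mathrm{div}\left(A\nabla_x w\right)=f_k$ on $\widetilde B_{r_0}$ controlled through \eqref{2-81} (with the $e^{2k}$ prefactor absorbed into $C_0=4\pi e\lambda^{-1}$, exactly as you say), apply Theorem \ref{stimacauchy} to the homogeneous remainder $z_k$, and bound the two factors using \eqref{3-81}, \eqref{1-82} and the estimates on $w_k$. That is precisely the paper's proof scheme.

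The one genuine gap is the construction of $w_k$. Your primary proposal, to solve the equation with \emph{zero Cauchy data} on $\{y=0\}$, is not available: for an elliptic operator with a right-hand side that is merely $L^2$, prescribing both $w=0$ and $\partial_y w=0$ on a hypersurface is an overdetermined, ill-posed problem for which existence fails in general. Your fallback (``estimate a particular solution with controlled Cauchy data'') leaves unproved exactly the hypothesis needed to apply Theorem \ref{stimacauchy} to $z_k$, namely $\partial_y z_k(\cdot,0)=0$; knowing $\partial_y v_k(\cdot,0)=0$ does not suffice unless you also arrange $\partial_y w_k(\cdot,0)=0$. The paper resolves this by a symmetry argument you do not mention: take $w_k\in H^2\left(\widetilde B_{r_0}\right)$ to be the solution of the \emph{Dirichlet} problem $q\,\partial_y^2 w_k+\mathrm{div}\left(A\nabla_x w_k\right)=f_k$ in $\widetilde B_{r_0}$, $w_k=0$ on $\partial\widetilde B_{r_0}$; since $f_k$ is even in $y$, uniqueness for this Dirichlet problem forces $w_k(x,-y)=w_k(x,y)$, hence $\partial_y w_k(\cdot,0)=0$, so $z_k=v_k-w_k$ satisfies \eqref{pbCauchy}. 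The smallness of the trace $w_k(\cdot,0)$, which you asserted, then comes from the energy estimate \eqref{2-92} combined with a trace inequality, yielding $\|w_k(\cdot,0)\|_{L^2\left(B_{r_0/2}\right)}\leq C r_0^{3/2}\|f_k\|_{L^2\left(\widetilde B_{r_0}\right)}\leq CHr_0^2\left(C_0r_0\right)^{2k}$ as in \eqref{1-93}--\eqref{1-94}. With this choice of $w_k$ the rest of your bookkeeping is exactly the paper's argument. (A trivial slip: $4\pi\lambda^{-1}r_0\leq\pi/2$ is not less than $1$; this is harmless, since all that is used is $e^{2k}\left(4\pi\lambda^{-1}|y|\right)^{2k}=\left(C_0|y|\right)^{2k}$.)
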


\begin{proof}
Let $w_{k}\in H^2\left(\widetilde{B}_{r_0}\right)$ be the solution to the following Dirichlet pronlem
\begin{equation}
\label{1-92}
\left\{\begin{array}{ll}
q(x)\partial^2_{y}w_{k}+\mbox{div}\left(A(x)\nabla_x w_{k}\right)=f_{k}, \quad \hbox{ in } \widetilde{B}_{r_0},\\[2mm]
w_{k}=0, \quad \hbox{on } \partial \widetilde{B}_{r_0}.
\end{array}\right.
\end{equation}
Notice that, since $f_{k}$ is an even function with respect to $y$, by the uniqueness to the Dirichlet problem \eqref{1-92} we have that $w_{k}$ is an even function with respect to $y$.

By standard regularity estimates \cite{GT} we have

\begin{equation}
\label{2-92}
\|w_{k}\|_{L^2\left(\widetilde{B}_{r_0}\right)}+r_0\|\nabla_{x,y}w_{k}\|_{L^2\left(\widetilde{B}_{r_0}\right)}\leq C\|f_{k}\|_{L^2\left(\widetilde{B}_{r_0}\right)},
\end{equation}
where $C$ depends on $\lambda$ only.
By the above inequality and by the trace inequality we get
\begin{gather}
\label{1-93}
\|w_{k}(\cdot,0)\|_{L^2\left(B_{r_0/2}\right)}\leq \\ \nonumber
\leq C\left(r^{-1/2}_0\|w_{k}\|_{L^2\left(\widetilde{B}_{r_0}\right)}+r^{1/2}_0\|\nabla_{x,y}w_{k}\|_{L^2\left(\widetilde{B}_{r_0}\right)}\right)
\leq C r^{3/2}_0\|f_{k}\|_{L^2\left(\widetilde{B}_{r_0}\right)},
\end{gather}
where $C$ depends on $\lambda$ only.

Now, denoting by

\begin{equation}
\label{2-93}
z_{k}=v_{k}-w_{k},
\end{equation}
by \eqref{2-81}, \eqref{1-82}, \eqref{2-92} and \eqref{1-93} we have
\begin{equation}
\label{1-94}
\|z_{k}(\cdot,0)\|_{L^2\left(B_{r_0/2}\right)}\leq \varepsilon+Cr^2_0H\left(C_0r_0\right)^{2k},
\end{equation}
and

\begin{equation}
\label{2-94}
\|z_{k}\|_{L^2\left(\widetilde{B}_{r_0}\right)}\leq Cr^{1/2}_0H\left(e^{2k}+r^2_0\left(C_0r_0\right)^{2k}\right),
\end{equation}
where $C$ depends on $\lambda$ only.

Now by \eqref{1-92} we have

\begin{equation*}
\left\{\begin{array}{ll}
q(x)\partial^2_{y}z_{k}+\mbox{div}\left(A(x)\nabla_x z_{k}\right)=0, \quad \hbox{ in } \widetilde{B}_{r_0},\\[2mm]
\partial_y z_{k}(\cdot,0)=0, \quad \hbox{on } B_{r_0},
\end{array}\right.
\end{equation*}
hence by applying Theorem \ref{stimacauchy} to the function $z_{k}$ and by using \eqref{3-81},  \eqref{2-93}, \eqref{1-94} and \eqref{2-94}  the thesis follows.
\end{proof}
In order to prove Theorem \ref{5-115} we use a Carleman estimate proved, in the context of parabolic operator, in  \cite{EsVe}.

Let $P$ be the elliptic operator
\begin{equation}
\label{oper}
P:=q(x)\partial^2_{y}+\mbox{div}\left(A(x)\nabla_x \right),
\end{equation}

\begin{theo}\label{Carleman}
Let $P$ be the operator \eqref{oper} and assume that \eqref{1-65} and \eqref{3-65} are satisfied. There exists a constant $C_{\ast}>1$ depending on $\lambda$ and $\Lambda$ only such that, denoting

\begin{subequations}
\label{4-5-6-96}
\begin{equation}
\label{4-96}
\phi(s)=s\exp \left(\int^s_0\frac{e^{-C\eta}-1}{\eta}d\eta\right),
\end{equation}
\begin{equation}
\label{6-96}
\sigma(x,y)=\left(A^{-1}(0)x\cdot x+\left(q(0)\right)^{-1}y^2\right)^{1/2},
\end{equation}
\begin{equation}
\label{5-96}
\delta(x,y)=\phi\left(\sigma(x,y)/2\sqrt{\lambda}\right),
\end{equation}
\begin{equation}
\label{palla n+1}
\widetilde{B}^{\sigma}_{r}= \left\{(x,y)\in \mathbb{R}^{n+1}: \sigma(x,y)\leq r\right\}, \quad \hbox{  } r>0,
\end{equation}
\end{subequations}
for every $\tau\geq C_{\ast}$ and $U\in C^{\infty}_0\left(\widetilde{B}^{\sigma}_{2\sqrt{\lambda}/C_{\ast}}\setminus\{0\}\right)$ we have

\begin{gather}
\label{6-118}
\tau\int_{\mathbb{R}^{n+1}}\delta^{1-2\tau}(x,y)\left\vert\nabla_{x,y}U\right\vert^2 dxdy+
\tau^3\int_{{\mathbb{R}^{n+1}}}\delta^{-1-2\tau}(x,y)\left\vert U\right\vert^2 dxdy\leq\\ \nonumber
\leq C_{\ast}\int_{{\mathbb{R}^{n+1}}}\delta^{2-2\tau}(x,y) \left\vert PU\right\vert^2dxdy.
\end{gather}

\end{theo}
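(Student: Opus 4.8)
The plan is to follow the scheme of \cite{EsVe}, here specialised to the genuinely elliptic operator $P$, in four steps: normalisation of the principal part at the origin, a clean Carleman estimate for the constant--coefficient operator, a weighted Caccioppoli bound for the Hessian, and absorption of the Lipschitz perturbation.

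\textbf{Step 1 (normalisation).} Since $A$ is symmetric and uniformly elliptic and $q>0$, the linear change of variables $(x,y)\mapsto \widetilde X$ that simultaneously sends $A(0)$ to the identity and $q(0)$ to $1$ turns $\sigma(x,y)$ into the Euclidean norm $|\widetilde X|$ and turns $P$ into $\widetilde P=\Delta_{\widetilde X}+R$, where $R=\sum_{i,j}b_{ij}(\widetilde X)\,\partial_i\partial_j+\sum_i c_i(\widetilde X)\,\partial_i$ has bounded measurable coefficients with, by \eqref{2-65} and \eqref{3'-65}, $|b_{ij}(\widetilde X)|\le C|\widetilde X|$ and $|c_i(\widetilde X)|\le C$ on the relevant ball, $C$ depending on $\lambda,\Lambda$ only. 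This change of variables multiplies every weight $\delta^{a-2\tau}$ by bounded factors and preserves the form of \eqref{6-118}. After it, $\delta(\widetilde X)=\phi\big(|\widetilde X|/2\sqrt{\lambda}\big)$; since $\phi(s)/s\to1$ as $s\to0$ and $\phi$ is smooth and increasing for $s>0$, one has $\delta\simeq|\widetilde X|$ on $\widetilde B^{\sigma}_{2\sqrt{\lambda}/C_{\ast}}$, and on this ball $|\widetilde X|\le C/C_{\ast}$.

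\textbf{Step 2 (Carleman estimate for $\Delta$).} This is the core. Conjugating by the weight (set $V=\delta^{-\tau}U$) and passing to polar coordinates $\widetilde X=\rho\omega$, $\rho>0$, $\omega\in S^{n}$, $t=\log\rho$, one separates the radial and spherical parts and reduces \eqref{6-118} for $\Delta$ to a one--dimensional differential inequality in $t$ carrying the Laplace--Beltrami operator on $S^{n}$ as a parameter. The precise choice of $\phi$ in \eqref{4-96} is designed so that $s\phi'(s)/\phi(s)=e^{-C s}$, a decreasing function of $s$ equal to $1$ at $s=0$; this monotonicity is exactly the pseudoconvexity condition that makes the leading commutator term nonnegative uniformly in the spherical frequency, and, after integration by parts, it produces the two positive bulk terms $\tau\int\delta^{1-2\tau}|\nabla U|^{2}$ and $\tau^{3}\int\delta^{-1-2\tau}|U|^{2}$, while the lower--order remainders created by the conjugation and by the coordinate change are, for $\tau\ge C_{\ast}$, a small fraction of these and hence absorbed; no boundary terms appear because $U\in C^{\infty}_{0}\big(\widetilde B^{\sigma}_{2\sqrt{\lambda}/C_{\ast}}\setminus\{0\}\big)$. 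This yields
\[
\tau\int_{\mathbb{R}^{n+1}}\delta^{1-2\tau}|\nabla_{x,y} U|^{2}\,dxdy+\tau^{3}\int_{\mathbb{R}^{n+1}}\delta^{-1-2\tau}|U|^{2}\,dxdy\le C_{\ast}\int_{\mathbb{R}^{n+1}}\delta^{2-2\tau}|\Delta U|^{2}\,dxdy .
\]

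\textbf{Step 3 (weighted Hessian bound) and Step 4 (absorption of $R$).} A weighted interior elliptic estimate — multiplying $\Delta U$ by $\delta^{3-2\tau}$ times second derivatives of $U$ and integrating by parts, using $|\nabla\delta|\simeq1$ and $|D^{2}\delta|\lesssim\delta^{-1}$ to control derivatives falling on the weight — gives $\int\delta^{3-2\tau}|D^{2}U|^{2}\le C\big(\tau\int\delta^{1-2\tau}|\nabla U|^{2}+\int\delta^{2-2\tau}|\Delta U|^{2}\big)$. Now write $\Delta U=\widetilde PU-RU$; on $\widetilde B^{\sigma}_{2\sqrt{\lambda}/C_{\ast}}$ one has $|RU|^{2}\le C\big(|\widetilde X|^{2}|D^{2}U|^{2}+|\nabla U|^{2}\big)$ with $\delta^{2-2\tau}|\widetilde X|^{2}\le(C/C_{\ast})\,\delta^{3-2\tau}$ and $\delta^{2-2\tau}\le(C/C_{\ast})\,\delta^{1-2\tau}$ (since $\delta\le C/C_{\ast}$). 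Combining these with Steps 2 and 3, choosing first $C_{\ast}$ large (depending on $\lambda,\Lambda$) so that the Hessian and gradient contributions of $RU$ are absorbed, and then $\tau\ge C_{\ast}$, all perturbative terms move to the left; undoing the change of variables of Step 1 then gives \eqref{6-118}.

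\textbf{Main obstacle.} The delicate step is Step 2: performing the commutator computation with the singular weight $\delta^{-\tau}$ and verifying that the choice \eqref{4-96} of $\phi$ makes the leading commutator nonnegative for every spherical frequency while keeping all remainder terms subordinate to the two bulk terms for $\tau$ large. The change of variables, the weighted Caccioppoli estimate, and the final absorption are routine once the clean estimate for $\Delta$ is available.
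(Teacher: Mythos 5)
The paper itself contains no proof of Theorem \ref{Carleman}: the estimate is quoted from \cite{EsVe} (its ancestry being \cite{AKS}, \cite{hormanderpaper1}), so your sketch has to stand on its own. Your Steps 1--2 are the standard normalisation and the standard convexified weight (the observation $s\phi'(s)/\phi(s)=e^{-Cs}$ is indeed the key property of \eqref{4-96}), but the scheme collapses at Steps 3--4, and this is a genuine gap, not a bookkeeping issue.

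First, the weighted Caccioppoli inequality of Step 3 is not available with a single factor $\tau$. Integrating by parts twice gives the exact identity
$\int w\,|D^{2}U|^{2}=\int w\,(\Delta U)^{2}+\int\nabla w\cdot\nabla U\,\Delta U+\tfrac12\int\Delta w\,|\nabla U|^{2}$,
and for $w=\delta^{3-2\tau}$ one has $\Delta w\simeq 4\tau^{2}\delta^{1-2\tau}|\nabla\delta|^{2}>0$ for large $\tau$; since $|\nabla\delta|\simeq 1$ near the origin, the honest bound is $\int\delta^{3-2\tau}|D^{2}U|^{2}\lesssim\int\delta^{3-2\tau}|\Delta U|^{2}+\tau^{2}\int\delta^{1-2\tau}|\nabla U|^{2}$, and the $\tau^{2}$ cannot be removed (when $\Delta U$ is negligible the identity forces it). Second, with the correct $\tau^{2}$ the absorption of Step 4 fails: the Hessian part of $RU$ produces a term of size $C\rho_{\ast}\,\tau^{2}\int\delta^{1-2\tau}|\nabla U|^{2}$, with $\rho_{\ast}\simeq 1/C_{\ast}$ the radius of the support, and absorbing it into the available $\tau\int\delta^{1-2\tau}|\nabla U|^{2}$ requires $\rho_{\ast}\tau\le c$, i.e.\ an \emph{upper} bound $\tau\lesssim C_{\ast}$, whereas \eqref{6-118} must hold for every $\tau\ge C_{\ast}$. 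No rebalancing between the two left-hand terms helps: at distance $r$ from the origin each conjugated derivative costs a factor $\tau/r$, so the second-order part of $R$, whose coefficients are only $O(r)$, contributes after squaring roughly $\tau^{4}\int\delta^{-2\tau}U^{2}\le\rho_{\ast}\tau^{4}\int\delta^{-1-2\tau}U^{2}$ against the $\tau^{3}\int\delta^{-1-2\tau}U^{2}$ on the left --- Cauchy--Schwarz on $|RU|^{2}$ loses exactly one power of $\tau$, uniformly in how small you take the support, and shrinking the support further only raises $C_{\ast}$ without ever covering all $\tau\ge C_{\ast}$. This is precisely why the Lipschitz continuity of $A$ and $q$ has to be exploited \emph{inside} the commutator/integration-by-parts computation, where $A(x)-A(0)$, $q(x)-q(0)$ and their (bounded) gradients occur \emph{linearly}, each power of $\tau$ arriving paired with a compensating factor $|X|\simeq\delta$, so that the error terms are $O(1/C_{\ast})$ relative to the positive terms uniformly in $\tau$; equivalently one treats the variable-coefficient operator directly in adapted (geodesic-type) polar coordinates as in \cite{AKS}, or via the identities of \cite{hormanderpaper1}, \cite{EsVe}. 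In short, the part you label ``routine once the clean estimate for $\Delta$ is available'' is where the theorem actually lives, and as proposed it would fail.
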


\bigskip

\textbf{Conclusion of the proof of Theorem \ref{5-115}}

We begin to observe that
\begin{equation}
\label{5-119}
\widetilde{B}^{\sigma}_{\sqrt{\lambda} r}\subset \widetilde{B}_r \subset \widetilde{B}^{\sigma}_{r/\sqrt{\lambda}} \quad \hbox{, for every   }r>0,
\end{equation}
and setting $r_1=\frac{\sqrt{\lambda} r_0}{16}$, by \eqref{1-91} we have

\begin{equation}
\label{2-120}
\|v_{k}\|_{L^2\left(\widetilde{B}^{\sigma}_{4r_1}\right)}\leq C S_k,
\end{equation}
where $C$ depends on $\lambda$ and $\Lambda$ only and
\begin{equation}
\label{3-120}
 S_k=\left(\varepsilon+H\left(C_1r_1\right)^{2k}\right)^{\beta}\left(He^{2k}+H\left(C_1r_1\right)^{2k}\right)^{1-\beta},
\end{equation}
where $C_1=16C_0/\sqrt{\lambda}$.

Denote by
\[\delta_0(r):=\phi(r/2\sqrt{\lambda})\quad\hbox{, for every } r>0\]
and let us consider a function $h\in C^2_0\left(0, \delta_0\left(2\sqrt{\lambda}/C_{\ast}\right)\right)$ such that $0\leq h\leq 1$ and

\begin{subequations}
\begin{equation*}
h(s)=1 ,\quad\hbox{ for every   } s\in\left[\delta_0\left(2r_1\right), \delta_0\left(\sqrt{\lambda}/C_{\ast}\right)\right],
\end{equation*}
\begin{equation*}
h(s)=0, \quad\hbox{ for every   } s\in\left[0,\delta_0\left(r_1\right)\right]\cup \left[\delta_0\left(3\sqrt{\lambda}/2C_{\ast}\right), \delta_0\left(2\sqrt{\lambda}/C_{\ast}\right)\right],
\end{equation*}
\begin{equation*}
r_1\left\vert h'(s)\right\vert+r_1^2\left\vert h''(s)\right\vert\leq c, \quad\hbox{ for every } s\in\left[\delta_0\left(r_1\right), \delta_0\left(2r_1\right)\right],
\end{equation*}
\begin{equation*}
\left\vert h'(s)\right\vert +\left\vert h''(s)\right\vert\leq c, \quad\hbox{ for every } s\in\left[\delta_0\left(\sqrt{\lambda}/C_{\ast}\right), \delta_0\left(3\sqrt{\lambda}/2C_{\ast}\right)\right],
\end{equation*}
\end{subequations}
where $c$ is an absolute constant.

Moreover, let us define
\begin{equation*}
\zeta(x,y)=h\left(\delta(x,y)\right).
\end{equation*}
Notice that if $2r_1\leq \sigma(x,y)\leq \sqrt{\lambda}/C_{\ast}$ then $\zeta(x,y)=1$ and if $\sigma(x,y)\geq 2\sqrt{\lambda}/C_{\ast}$ or $\sigma(x,y)\leq r_1$ then $\zeta(x,y)=0$.

For the sake of brevity, in what follows we shall omit $k$ from $v_{k}$ and $f_k$. By density, we can apply \eqref{6-118} to the function $U=\zeta v$ and we have, for every $\tau\geq C_{\ast}$,

\begin{gather}
\label{1-121}
\tau\int_{\widetilde{B}^{\sigma}_{2\sqrt{\lambda}/C_{\ast}}}\delta^{1-2\tau}(x,y)\left\vert\nabla_{x,y}\left(\zeta v\right)\right\vert^2+
\tau^3\int_{\widetilde{B}^{\sigma}_{2\sqrt{\lambda}/C_{\ast}}}\delta^{-1-2\tau}(x,y) \left\vert\zeta v\right\vert^2 \leq\\ \nonumber
\leq C\int_{\widetilde{B}^{\sigma}_{2\sqrt{\lambda}/C_{\ast}}}\delta^{2-2\tau}(x,y) \left\vert f\right\vert^2 \zeta^2+C\int_{\widetilde{B}^{\sigma}_{2\sqrt{\lambda}/C_{\ast}}}\delta^{2-2\tau}(x,y) \left\vert P\zeta\right\vert^2 v^2+ \\ \nonumber
+C\int_{\widetilde{B}^{\sigma}_{2\sqrt{\lambda}/C_{\ast}}}\delta^{2-2\tau}(x,y) \left\vert\nabla_{x,y} v\right\vert^2\left\vert\nabla_{x,y}\zeta\right\vert^2:=I_1+I_2+I_3 ,
\end{gather}
where $C$ depends $\lambda$ and $\Lambda$ only.

\bigskip

\textit{Estimate of $I_1$}.

Notice that

\begin{equation}
\label{2-122}
\frac{\sqrt{|x|^2+y^2}}{2C_2}\leq\delta(x,y)\leq\frac{C_2\sqrt{|x|^2+y^2}}{2},
\end{equation}
where $C_2>1$ depends on $\lambda$ and $\Lambda$ only.

By \eqref{1-81}, \eqref{5-119} and \eqref{2-122} we have

\begin{gather}
\label{I-1}
\int_{\widetilde{B}^{\sigma}_{2\sqrt{\lambda}/C_{\ast}}}\delta^{2-2\tau}(x,y) \left\vert f\right\vert^2 \zeta^2dxdy\leq \int_{\widetilde{B}_{2}}\left(2C_2 |y|^{-1}\right)^{-2+2\tau} \left\vert f\right\vert^2 dxdy\leq\\ \nonumber
\leq\int^{2}_{-2}\left[\left(2C_2 |y|^{-1}\right)^{-2+2\tau}\int_{B_2}\left\vert f(x,y)\right\vert^2 dx\right]dy\leq CH^2 \int^{2}_{-2}\left(2C_2 |y|^{-1}\right)^{-2+2\tau}\left(C_0|y|\right)^{4k}dy,
\end{gather}
where $C$ depends on $\lambda$ and $\Lambda$ only.

Now let $k$ and $\tau$ satisfy the relation

\begin{equation}
\label{1-123}
\frac{\tau-1}{2}\leq k.
\end{equation}
By \eqref{I-1} and \eqref{1-123} we get

\begin{equation}
\label{2-123}
I_1\leq C H^2 \left(C_3\right)^{4k},
\end{equation}
where $C_3=2 C_0 C_2$.

\bigskip

\textit{Estimate of $I_2$}

By \eqref{3-81} and \eqref{2-120} and \eqref{1-121}  we have
\begin{gather*}
I_2 \leq Cr_1^{-4}\int_{\widetilde{B}^{\sigma}_{2r_1}\setminus\widetilde{B}^{\sigma}_{r_1}}\delta^{2-2\tau}(x,y) v^2 dxdy
+C\int_{\widetilde{B}^{\sigma}_{3\sqrt{\lambda}/2C_{\ast}}\setminus\widetilde{B}^{\sigma}_{\sqrt{\lambda}/C_{\ast}}}\delta^{2-2\tau}(x,y) v^2 dxdy \leq\\
\leq C\left(r_1^{-3}\delta^{2-2\tau}_0(r_1)S^2_k+e^{4k}H^2\delta^{2-2\tau}_0(\sqrt{\lambda}/C_{\ast})\right),
\end{gather*}
hence \eqref{2-122} gives

\begin{equation}\label{1-125}
I_2 \leq C\left(\delta^{-1-2\tau}_0(r_1)S^2_k+e^{4k}H^2\delta^{-1-2\tau}_0(\sqrt{\lambda}/C_{\ast})\right).
\end{equation}

\textit{Estimate of $I_3$}

By \eqref{1-121}  we have

\begin{gather}\label{2-125}
I_3 \leq Cr_1^{-2}\delta^{2-2\tau}_0(r_1)\int_{\widetilde{B}^{\sigma}_{2r_1}\setminus\widetilde{B}^{\sigma}_{r_1}} \left\vert\nabla_{x,y} v\right\vert^2 dxdy+\\ \nonumber+C\delta^{2-2\tau}_0(\sqrt{\lambda}/C_{\ast})\int_{\widetilde{B}^{\sigma}_{3\sqrt{\lambda}/2C_{\ast}}\setminus\widetilde{B}^{\sigma}_{\sqrt{\lambda}/C_{\ast}}} \left\vert\nabla_{x,y} v\right\vert^2 dxdy.
\end{gather}
Now in order to estimate from above the righthand side of \eqref{2-125} we use the Caccioppoli inequality, \eqref{3-81}, \eqref{2-81} and \eqref{2-120} and we get
\begin{gather}
\label{1-127}
I_3 \leq C\delta^{2-2\tau}_0(r_1)\left(r_1^{-4}\int_{\widetilde{B}^{\sigma}_{4r_1}\setminus\widetilde{B}^{\sigma}_{r_1/2}} v^2 dxdy+\int_{\widetilde{B}^{\sigma}_{4r_1}\setminus\widetilde{B}^{\sigma}_{r_1/2}} f^2 dxdy\right)+\\ \nonumber+C\delta^{2-2\tau}_0(\sqrt{\lambda}/C_{\ast})\int_{\widetilde{B}^{\sigma}_{3\sqrt{\lambda}/2C_{\ast}}\setminus\widetilde{B}^{\sigma}_{\sqrt{\lambda}/C_{\ast}}} \left\vert\nabla_{x,y} v\right\vert^2 dxdy \leq\\ \nonumber \leq C  \left(S_k^2+H^2\left(C_1r_1\right)^{4k}\right)\delta^{1-2\tau}_0(r_1)+CH^2e^{4k}\delta^{1-2\tau}_0(\sqrt{\lambda}/C_{\ast}):=\widetilde{I}_3.
\end{gather}

\bigskip

Now let $r_1\leq \frac{\sqrt{\lambda}}{2C_{\ast}}$, let $\rho$ be such that $\frac{2r_1}{\sqrt{\lambda}}\leq\rho\leq\frac{1}{C_{\ast}}$ and denote by $\widetilde{\rho}=\sqrt{\lambda}\rho$. By estimating from below trivially the left hand side of \eqref{1-121} and taking into account \eqref{1-127} we have
\begin{equation}
\label{2-127}
\delta^{1-2\tau}_0(\widetilde{\rho})\int_{\widetilde{B}^{\sigma}_{\widetilde{\rho}}\setminus\widetilde{B}^{\sigma}_{2r_1}}\left\vert\nabla_{x,y} v\right\vert^2+
\delta^{-1-2\tau}_0(\widetilde{\rho})\int_{\widetilde{B}^{\sigma}_{\widetilde{\rho}}\setminus\widetilde{B}^{\sigma}_{2r_1}}\left\vert v\right\vert^2\leq I_1+I_2+\widetilde{I}_3.
\end{equation}
Now let us add at both the side of \eqref{2-127} the quantity
\begin{equation*}
\delta^{1-2\tau}_0(\widetilde{\rho})\int_{\widetilde{B}^{\sigma}_{2r_1}}\left\vert\nabla_{x,y} v\right\vert^2+
\delta^{-1-2\tau}_0(\widetilde{\rho})\int_{\widetilde{B}^{\sigma}_{2r_1}} v^2,
\end{equation*}
by using standard estimates for second order elliptic equations and by taking into account that $\delta_0(\widetilde{\rho})\geq\delta_0(r_1)$, we have
\begin{equation}
\label{1-128}
\int_{\widetilde{B}^{\sigma}_{\widetilde{\rho}}}\left\vert\nabla_{x,y} v\right\vert^2+
\int_{\widetilde{B}^{\sigma}_{\widetilde{\rho}}} v^2 \leq \delta^{1+2\tau}_0(\widetilde{\rho}) \left(I_1+I_2+C\widetilde{I}_3\right),
\end{equation}
where $C$ depends on $\lambda$ and $\Lambda$ only.

Now by \eqref{2-122}, \eqref{2-123}, \eqref{1-125}, \eqref{1-127} and \eqref{1-128} it is simple to derive that if \eqref{1-123} is satisfied then
we have
\begin{gather}
\label{3-130}
\int_{\widetilde{B}_{\lambda\rho}}\left\vert\nabla_{x,y} v\right\vert^2+
\int_{\widetilde{B}_{\lambda\rho}} v^2 \leq\\ \nonumber \leq C \left[S^2_k\left(\frac{\delta_0(\widetilde{\rho})}{\delta_0(r_1)}\right)^{1+2\tau}+H^2C_4^k\left(\frac{\delta_0(\widetilde{\rho})}
{\delta_0(\sqrt{\lambda}/C_{\ast})}\right)^{1+2\tau}\right],
\end{gather}
where $C_4>1$ depends on $\lambda$ and $\Lambda$ only.

Now, by applying a standard trace inequality and by recalling that $v(\cdot,0)=\widetilde{u}_{k}(\cdot,0)$ in $B_2$ (where $\widetilde{u}_{k}$ is defined by \eqref{u-k}) we have

\begin{gather}
\label{1-131}
\int_{B_{\lambda\rho/2}} \left\vert\widetilde{u}_{k}(\cdot,0) \right\vert^2\leq \\ \nonumber \leq C\rho^{-1}\left[S^2_k\left(\frac{\delta_0(\widetilde{\rho})}{\delta_0(r_1)}\right)^{1+2\tau}+H^2C_4^k\left(\frac{\delta_0(\widetilde{\rho})}
{\delta_0(\sqrt{\lambda}/C_{\ast})}\right)^{1+2\tau}\right].
\end{gather}

By Proposition \ref{4-97}, by \eqref{3-120} and \eqref{1-131} we have, for $r_1\leq \frac{\sqrt{\lambda}}{2C_{\ast}}$

\begin{gather}
\label{1-132}
\rho\int_{B_{\lambda\rho/2}} \left\vert u(\cdot,0) \right\vert^2\leq C \left(H_{k,\tau}+H^2k^{-1/6}\right)+\\ \nonumber +C \left[C_5^{k}\left(\frac{\delta_0(\widetilde{\rho})}{\delta_0(r_1)}\right)^{1+2\tau}H^{2(1-\beta)}
\varepsilon^{2\beta}+H^2C_4^k\left(\frac{\delta_0(\widetilde{\rho})}
{\delta_0(\sqrt{\lambda}/C_{\ast})}\right)^{1+2\tau} \right],
\end{gather}
where

\begin{equation*}
H_{k,\tau}:=H^2 \left(\frac{\delta_0(\widetilde{\rho})}{\delta_0(r_1)}\right)^{1+2\tau}C_5^{k}r_1^{4\beta k}.
\end{equation*}
and $C$, $C_5$ depend on $\lambda, \Lambda$ only.

Now let us choose $\tau=\frac{4\beta k-1}{2}$. We have that \eqref{1-123} is satisfied and
by \eqref{2-122}, \eqref{1-132} we have that there exist constants $C_6>1$ and $k_0$ depending on $\lambda$ and $\Lambda$ only such that for every $k\geq k_0$ we have

\begin{gather}
\label{1-135}
\rho\int_{B_{\lambda\rho/2}} \left\vert u(\cdot,0) \right\vert^2\leq C_6 H_1^2\left[\left(C_6\rho r_1^{-1}\right)^{4\beta k}\varepsilon_1^{2\beta}+\left(C_6\rho\right)^{4\beta k}+k^{-1/6}\right],
\end{gather}
where
\begin{equation*}
H_1:=H+e\varepsilon \quad \mbox{ and } \varepsilon_1:=\frac{\varepsilon}{H+e\varepsilon}.
\end{equation*}

Now, let us denote by
\[\overline{k}:= \left[\frac{\log \varepsilon_1}{2\log r_1}\right]+1,\]
where, for any $s\in\mathbb{R}$, we set $[s]:=\max\left\{p\in\mathbb{Z}:p \leq s\right\}$. If $\overline{k}\leq k_0$ we choose $k=\overline{k}$ so that by \eqref{1-135} we have, for $\rho\leq 1/C_6$,
\begin{equation}
\label{1-136}
\rho\int_{B_{\lambda\rho/2}} \left\vert u(\cdot,0) \right\vert^2\leq C_2 H_1^2\left(\varepsilon_1^{2\beta\theta_0}+\left(\frac{2\log (1/r_1)}{\log (1/\varepsilon_1)}\right)^{1/6}\right),
\end{equation}
where
\begin{equation}
\label{2-136}
\theta_0=\frac{\log (1/C_6\rho)}{2\log (1/r_1)}.
\end{equation}
Otherwise, if $\overline{k} < k_0$ then multiplying both the side of such an inequality by $\log (1/C_6\rho)$ and by \eqref{2-136} we get $\theta_0 \log (1/\varepsilon_1)\leq k_0 \log (1/C_6\rho)$. Hence
\[(H+e\varepsilon)^{2\beta\theta_0}\leq (C_6\rho)^{-2\beta k_0}\varepsilon^{2\beta\theta_0}.\]
By this inequality and by \eqref{4iii-65} we have trivially
\begin{gather}
\label{2-137}
\int_{B_{\lambda\rho/2}} \left\vert u(\cdot,0) \right\vert^2\leq  (H+e\varepsilon)^2=\\ \nonumber (H+e\varepsilon)^{2(1-\beta\theta_0)}\varepsilon^{2\beta\theta_0}\leq (C_1\rho)^{-2\beta k_0}\varepsilon^{2\beta\theta_0}.
\end{gather}
Finally by \eqref{1-136} and \eqref{2-137} we obtain \eqref{SUCP}.
$\Box$

\subsection{Proof of Theorem \ref{5-115Boundary}}\label{SUCP boundary}
First, let us assume $A(0)=I$ where $I$ is the identity matrix $n\times n$. Following the arguments of \cite{AE} or  \cite{A-B-R-V} we have there exist $\rho_1, \rho_2\in (0,\rho_0]$ such that $\frac{\rho_1}{\rho_0},\frac{\rho_2}{\rho_0}$ depend on $\lambda,\Lambda, E$ only and we can construct a function $\Phi\in C^{1,1}(\overline{B}_{\rho_2}(0),\mathbb{R}^n)$ such that

\begin{subequations}
\label{Phi}
\begin{equation}
\label{Phia}
\Phi\left(B_{\rho_2}\right)\subset B_{\rho_1},
\end{equation}
\begin{equation}
\label{Phib}
\Phi(y',0)=(y',\phi(y')),\quad\hbox{ for every }  y'\in B^{\prime}_{\rho_2},
\end{equation}
\begin{equation}
\label{Phic}
\Phi\left(B^+_{\rho_2}\right)\subset K_{\rho_1},
\end{equation}
\begin{equation}
\label{Phid}
C_1^{-1} |y-z|\leq |\Phi(x)-\Phi(z)|\leq C_1|y-z|,\quad\hbox{ for every }  y,z\in B_{\rho_2},
\end{equation}
\begin{equation}
\label{Phie}
C_2^{-1}\leq |\textrm{det}D\Phi(y)|\leq C_2,\quad\hbox{ for every }  y\in B_{\rho_2},
\end{equation}
\begin{equation}
\label{Phif}
|\textrm{det}D\Phi(y)-\textrm{det}D\Phi(z)|\leq C_3|y-z|,\quad\hbox{ for every }  y,z\in B_{\rho_2},
\end{equation}
\end{subequations}
where $C_1,C_2,C_3\geq 1$ depend on $\lambda,\Lambda, E$ only.

Denoting

\begin{equation*}
\overline{A}(y)=|\textrm{det}D\Phi(y)|(D\Phi^{-1})(\Phi(y)) A(\Phi(y))(D\Phi^{-1})^{\ast}(\Phi(y)),
\end{equation*}
\begin{equation}
v(y,t)=u(\Phi(y),t),
\end{equation}
we have
\begin{subequations}
\label{A}
\begin{equation}
\label{Aa}
\overline{A}(0)=I,
\end{equation}
\begin{equation}
\label{Ab}
\overline{a}^{nk}(y',0)=\overline{a}^{kn}(y',0)=0, k=1,\ldots,n-1.
\end{equation}
\end{subequations}

Moreover, we have that the ellipticity and Lipschitz constants of $\overline{A}$ depend on $\lambda,\Lambda, E$ only. For every $y\in B_{\rho_2}(0)$, let us denote by $\tilde A(y)=\{\tilde a_{ij}(y)\}_{i,j=1}^n$ the matrix with entries given by
\[
\tilde a^{ij}(y',|y_n|)=\overline{a}^{ij}(y',|y_n|),\quad\hbox{ if either }  i,j\in\{1,\ldots,n-1\}\hbox{, or }i=j=n,
\]
\[
\tilde a^{nj}(y',y_n)=\tilde a^{jn}(y',y_n)=\textrm{sgn}(y_n)\overline{a}^{nj}(y',|y_n|),\quad\hbox{ if }  1\leq j\leq n-1.
\]
We have that $\tilde A$ satisfies the same ellipticity and Lipschitz continuity conditions as $\overline{A}$.

Now, if $u$ satisfies the boundary condition \eqref{DirichBoundary} then we define
\begin{equation*}
U(y,t)=\textrm{sgn}(y_n)v(y',|y_n|,t),\quad\hbox{ for } (y,t)\in B_{\rho_2}\times (-\lambda\rho_2,\lambda\rho_2),
\end{equation*}

\begin{equation*}
\widetilde{q}(y)=|\textrm{det}D\Phi(y',|y_n|)|,\quad\hbox{ for } y\in B_{\rho_2},
\end{equation*}
we have that $U\in\mathcal{W}\left((-\lambda\rho_2,\lambda\rho_2);B_{\rho_2}\right)$ is a solution to
\begin{equation} \label{4i-65Boundary1}
\widetilde{q}(y)\partial^2_{t}U-\mbox{div}\left(\widetilde{A}(y)\nabla U\right)=0, \quad \hbox{in } B_{\rho_2}\times(-\lambda\rho_2,\lambda\rho_2).
\end{equation}
Moreover, by \eqref{Phid} we have that
\begin{equation*}
K_{r/C_1}\subset\Phi\left(B^+_{r}\right)\subset K_{C_1r}\quad\hbox{, for every } r\leq\rho_2.
\end{equation*}
Now we can apply Theorem \ref{5-115} to the function $U$ and then by simple changes of variables in the integrals we obtain \eqref{SUCPBoundary}. In the general case $A(0)\neq I$ we can consider a linear transformation $G:\mathbb{R}^n\rightarrow \mathbb{R}^n$ such that setting $A'(Gx)=\frac{GA(x)G^{\ast}}{\textrm{det}G}$ we have $A'(0)=I$. Therefore, noticing that

\begin{equation*}
B_{\sqrt{\lambda} r}\subset G\left(B_{r}\right)\subset B_{\sqrt{\lambda^{-1}} r},\quad\hbox{ for every } r>0,
\end{equation*}
it is a simple matter to get \eqref{SUCPBoundary} in the general case.

If $u$ satisfies the boundary condition \eqref{NeumBoundary} then we define
\begin{equation*}
V(y,t)=v(y',|y_n|,t),\quad\hbox{ for } (y,t)\in B_{\rho_2}\times (-\lambda\rho_2,\lambda\rho_2),
\end{equation*}
and we get that $V$ is a solution to \eqref{4i-65Boundary}. Therefore, arguing as before we obtain again \eqref{SUCPBoundary}.$\square$

\section{Concluding Remark - A first order perturbation}\label{Concluding}
In this subsection we outline the proof of an extension of Theorems \ref{5-115}, \ref{5-115Boundary} for solution to the equation
\begin{equation} \label{4i-65CR}
q(x)\partial^2_{t}u-Lu=0, \quad \hbox{in } B_{\rho_0}\times(-\lambda\rho_0,\lambda\rho_0).
\end{equation}
where
\begin{equation} \label{L}
Lu=\mbox{div}\left(A(x)\nabla_x u\right)+b(x)\cdot\nabla_x u+c(x)u,
\end{equation}
and $A$, $q$ satisfy \eqref{1-65}, \eqref{3-65}, $b=(b^1,\cdots,b^n)$ $b^j\in C^{0,1}(\mathbb{R}^n)$, $c\in L^{\infty}(\mathbb{R}^n)$. Moreover we assume

\begin{subequations}
\begin{equation} \label{3-65aCR}
\left\vert b(x)\right\vert\leq\lambda^{-1}\rho_0^{-1}, \quad \hbox{for every } x\in\mathbb{R}^n,
\end{equation}
\begin{equation} \label{3'-65CR}
\left\vert b(x)-b(y)\right\vert\leq\frac{\Lambda}{\rho^2_0} \left\vert x-y \right\vert, \quad \hbox{for every } x, y\in\mathbb{R}^n.
\end{equation}
\end{subequations}
and
\begin{equation} \label{c-CR}
\left\vert c(x)\right\vert\leq\lambda^{-1}\rho_0^{-2}, \quad \hbox{for every } x\in\mathbb{R}^n.
\end{equation}

In what follows we assume $\rho_0=1$.

First of all we consider the case in which
\begin{equation} \label{condition-b}
b\equiv 0
\end{equation}
and we set

\begin{equation} \label{L}
L_0u=\mbox{div}\left(A(x)\nabla_x u\right)+c(x)u,
\end{equation}

Let us denote by $\lambda_j$, with $\lambda_1\leq\cdots \leq\lambda_m\leq 0<\lambda_{m+1}\leq \cdots\leq\lambda_j\leq\cdots$ the eigenvalues associated to the problem

\begin{equation}
\label{1-71CR}
\left\{\begin{array}{ll}
L_0v+\omega q(x)v=0, & \textrm{in }B_2,\\[2mm]
v\in H^1\left(B_2\right),
\end{array}\right.
\end{equation}
and by $e_j(\cdot)$ the corresponding eigenfunctions normalized by

\begin{equation} \label{2-71CR}
\int_{B_2}e^2_j(x)q(x)dx=1.
\end{equation}
In this case the main difference with respect to the case considered above is the presence of non positive eigenvalues $\lambda_1\leq\cdots \leq\lambda_m$. In what follows we indicate the simple changes in the proof of Theorem \ref{5-115} in order to get the same estimate \eqref{SUCP} (with maybe different constants $s_0$ and $C$).
Let $\varepsilon$ and $H$ be the same of  \eqref{4ii-65} and  \eqref{4iii-65}

 Likewise the case $b\equiv 0, c\equiv 0$, the proof can be reduced to the even part $u_+$ with respect to $t$ of solution $u$ of equation \eqref{4i-65CR}. Moreover denoting again by
\begin{equation} \label{3-71CR}
\widetilde{u}(x,t):=\sum_{j=1}^{\infty}\alpha_j e_j(x)\cos\sqrt{\lambda_j} t,
\end{equation}
it is easy to check that instead of Proposition \ref{pag71} we have
\begin{prop}\label{pag71CR}
We have
\begin{equation} \label{1-72CR}
\sum_{j=1}^{\infty}\left(1+|\lambda_j|+\lambda_j^2\right)\alpha^2_j\leq C H^2,
\end{equation}
where $C$ depends on $\lambda, \Lambda$ only.
Moreover, $\widetilde{u}\in \mathcal{W}\left(\mathbb{R};B_2\right)\cap C^0\left(\mathbb{R};H^2\left(B_2\right)\cap H^1_0\left(B_2\right)\right)$ is an even function with respect to variable $t$ and it satisfies
\begin{equation}
\label{2-72CR}
\left\{\begin{array}{ll}
q(x)\partial^2_{t}\widetilde{u}-L_0 \widetilde{u}=0, \quad \hbox{in } B_2\times \mathbb{R},\\[2mm]
\widetilde{u}(\cdot,0)=\widetilde{u}_0, \quad \hbox{in } B_2,\\[2mm]
\partial_t\widetilde{u}(\cdot,0)=0, \quad \hbox{in } B_2.
\end{array}\right.
\end{equation}
 \end{prop}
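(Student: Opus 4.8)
The plan is to transcribe, almost word for word, the proof of Proposition \ref{pag71}, the only genuine novelty being the finitely many non‑positive eigenvalues $\lambda_1\le\cdots\le\lambda_m\le 0$. First I would record a uniform lower bound for the spectrum of \eqref{1-71CR}: testing the (weak) eigenvalue equation with $v=e_j$ and using the normalization \eqref{2-71CR} gives
$$\lambda_j=\int_{B_2}A(x)\nabla_x e_j\cdot\nabla_x e_j\,dx-\int_{B_2}c(x)e_j^2\,dx\ge\lambda\int_{B_2}|\nabla_x e_j|^2\,dx-\lambda^{-1}\int_{B_2}e_j^2\,dx\ge-\lambda^{-2},$$
where I used \eqref{1-65}, \eqref{3-65}, \eqref{c-CR} and $\int_{B_2}e_j^2\le\lambda^{-1}\int_{B_2}q\,e_j^2=\lambda^{-1}$. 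Since the eigenvalues of \eqref{1-71CR} accumulate only at $+\infty$, this forces $m<\infty$ and $|\lambda_j|\le\lambda^{-2}$ for $1\le j\le m$.

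Next, exactly as in Proposition \ref{pag71}, I would choose the extension $\widetilde u_0$ of $u(\cdot,0)$ in \eqref{2-70} to vanish near $\partial B_2$, so that $\widetilde u_0\in H^2(B_2)\cap H^1_0(B_2)$; then the eigenvalue relation $L_0e_j=-\lambda_j q e_j$ and two integrations by parts give
$$\lambda_j\alpha_j=-\int_{B_2}(L_0\widetilde u_0)\,e_j\,dx=-\int_{B_2}\frac{L_0\widetilde u_0}{q}\,e_j\,q\,dx,$$
that is, $\{\lambda_j\alpha_j\}_j$ are the Fourier coefficients of $-q^{-1}L_0\widetilde u_0$ with respect to the orthonormal basis $\{e_j\}$ of $L^2(B_2;q\,dx)$. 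Parseval's identity then yields $\sum_j\alpha_j^2=\|\widetilde u_0\|^2_{L^2(B_2;q\,dx)}\le CH^2$ and $\sum_j\lambda_j^2\alpha_j^2=\|q^{-1}L_0\widetilde u_0\|^2_{L^2(B_2;q\,dx)}\le CH^2$, the latter because $\|L_0\widetilde u_0\|_{L^2(B_2)}\le C\|\widetilde u_0\|_{H^2(B_2)}\le CH$ by \eqref{1-65}, \eqref{c-CR} and \eqref{2-70}. Since $1+|\lambda_j|+\lambda_j^2\le\frac{3}{2}(1+\lambda_j^2)$, estimate \eqref{1-72CR} follows at once. (Alternatively one may test the bilinear form with $\widetilde u_0$ to get $\sum_j\lambda_j\alpha_j^2=\int_{B_2}A\nabla_x\widetilde u_0\cdot\nabla_x\widetilde u_0-\int_{B_2}c\,\widetilde u_0^2\le CH^2$ and then add $2\sum_{j\le m}|\lambda_j|\alpha_j^2\le 2\lambda^{-2}\sum_j\alpha_j^2\le CH^2$.)

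The assertions about $\widetilde u$ are obtained term by term as in Propositions \ref{pag71} and \ref{2-81prop}. Each summand $\alpha_je_j(x)\cos\sqrt{\lambda_j}\,t$ is even in $t$, belongs to $C^0(\mathbb{R};H^2(B_2)\cap H^1_0(B_2))$, and solves $q\partial_t^2(\cdot)-L_0(\cdot)=0$ because $L_0e_j=-\lambda_jq e_j$; for the finitely many indices $j\le m$ one has $\cos\sqrt{\lambda_j}\,t=\cosh\big(\sqrt{|\lambda_j|}\,t\big)$, bounded on every compact $t$-interval, so these terms are harmless. For $\lambda_j>0$ one has $|\cos\sqrt{\lambda_j}\,t|\le1$ and $|\sqrt{\lambda_j}\sin\sqrt{\lambda_j}\,t|\le\sqrt{\lambda_j}$, so the elliptic regularity estimates used in \eqref{2-85}--\eqref{86} bound the $H^2$, $H^1$ and $L^2$ norms of the partial sums of $\widetilde u$, $\partial_t\widetilde u$ and $\partial_t^2\widetilde u$ on $[-T,T]$ by tails of $\sum_j(1+|\lambda_j|+\lambda_j^2)\alpha_j^2$, which vanish by \eqref{1-72CR}; this gives convergence in $\mathcal{W}(\mathbb{R};B_2)\cap C^0(\mathbb{R};H^2(B_2)\cap H^1_0(B_2))$, hence \eqref{2-72CR} upon passing to the limit, while $\widetilde u(\cdot,0)=\sum_j\alpha_je_j=\widetilde u_0$ and $\partial_t\widetilde u(\cdot,0)=0$ are immediate. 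I expect the only real obstacle to be precisely this bookkeeping with the non-positive eigenvalues — establishing the uniform bound $\lambda_j\ge-\lambda^{-2}$ (so that $m$ is finite and $|\lambda_j|$ is controlled on the bad indices) and checking that their presence spoils neither \eqref{1-72CR} nor the convergence of the series; everything else is a verbatim repetition of the case $b\equiv c\equiv 0$ treated in Section \ref{SUCP interior}.
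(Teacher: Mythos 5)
Your proof is correct and follows exactly the route the paper intends: for Proposition \ref{pag71CR} the paper offers no separate argument (it only says the analogue of Proposition \ref{pag71} is "easy to check"), and your transcription of that proof, supplemented by the uniform spectral lower bound $\lambda_j\geq-\lambda^{-2}$ (so that $m<\infty$ and $|\lambda_j|\leq\frac{1}{2}(1+\lambda_j^2)$ handles the extra term in \eqref{1-72CR}) and by the term-by-term treatment of the finitely many $\cosh$-modes, is precisely the intended adaptation. Your explicit remark that the extension $\widetilde u_0$ should be taken to vanish near $\partial B_2$, so that the double integration by parts against $e_j$ is legitimate, is a point the paper leaves implicit already in Proposition \ref{pag71}, and it is handled correctly here.
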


Similarly to \eqref{unic}, the uniqueness to the Cauchy problem for the equation $q(x)\partial^2_{t}u-L_0u=0$ implies

\[\widetilde{u}(x,t)=u_+(x,t), \quad \mbox{ for } |x|+\lambda^{-1}|t|< 1.\]

Likewise the Section \ref{SUCP_Estimates} we set
\[\widetilde{u}_k:=\widetilde{u}_{\overline{\mu},k},\]
where $\overline{\mu}:=k^{-\frac{1}{6}}$, $k\geq 1$ and $\widetilde{u}_{\mu,k}$ is defined by \eqref{2-76}. In the present case we set, instead of \eqref{3-80},

\begin{gather}
\label{3-80CR}
v_{k}(x,y):=v^{(1)}_{k}(x,y)+v^{(2)}_{k}(x,y),
\end{gather}
where
\begin{subequations}\label{v1-2}
\begin{equation}\label{v1}
v^{(1)}_{k}(x,y)=\sum_{j=1}^{m}\alpha_j \widehat{\varphi}_{\overline{\mu},k}\left(i\sqrt{|\lambda_j|}\right)\cos\left(\sqrt{|\lambda_j|y}\right)e_j(x)\mbox{ , for  } (x,y)\in B_2\times\mathbb{R}
\end{equation}
\begin{equation}\label{v-2}
v^{(2)}_{k}(x,y)=\sum_{j=m+1}^{\infty}\alpha_j \widehat{\varphi}_{\overline{\mu},k}\left(\sqrt{\lambda_j}\right)g_k\left(y\sqrt{\lambda_j}\right)e_j(x)\mbox{ , for  } (x,y)\in B_2\times\mathbb{R}.
\end{equation}
\end{subequations}
and $g_k(z)$ is the same function introduced in Section \ref{SUCP_Estimates}, in particular it satisfies \eqref{2-80}.

Instead of Proposition \ref{2-81prop} we have
\begin{prop}\label{2-81propCR}
Let $v_k$ be defined by \eqref{3-80CR}. We have that $v_{k}(\cdot,y)$ belongs to $H^1\left(B_2\right)$ for every $y\in \mathbb{R}$, $v_{k}(x,y)$ is an even function with respect to $y$ and it satisfies
\begin{equation}
\label{4-5-6-81CR}
\left\{\begin{array}{ll}
q(x)\partial^2_{y}v_{k}+\mbox{div}\left(A(x)\nabla_x v_{k}\right)=f_{k}(x,y), \quad \hbox{in } B_2\times \mathbb{R},\\[2mm]
v_{k}(\cdot,0)=\widetilde{u}_{k},\quad \hbox{in } B_2.
\end{array}\right.
\end{equation}
where
\begin{equation}
\label{1-81CR}
f_{k}(x,y)=\sum_{j=m+1}^{\infty}\lambda_j\alpha_j \widehat{\varphi}_{\overline{\mu},k}\left(\sqrt{\lambda_j}\right)\left(g^{\prime\prime}_k\left(y\sqrt{\lambda_j}\right)-
g_k\left(y\sqrt{\lambda_j}\right)\right)e_j(x).
\end{equation}
Moreover we have

\begin{equation}
\label{3-81CR}
\sum_{j=0}^{2}\|\partial^{j}_yv_{k}(\cdot,y)\|_{H^{2-j}\left(B_2\right)}\leq Ce^{\lambda\sqrt{|\lambda_1|}}H e^{2k} \mbox{, for every  } y\in \mathbb{R},
\end{equation}

\begin{equation}
\label{2-81CR}
\|f_{k}(\cdot,y)\|_{L^2\left(B_2\right)}\leq CH e^{2k}\min\left\{1,\left(4\pi\lambda^{-1}|y|\right)^{2k}\right\} \mbox{, for every  } y\in \mathbb{R},
\end{equation}
where $C$ depends on $\lambda$ and $\Lambda$ only, and

\begin{equation}
\label{1-82CR}
\|v_{k}(\cdot,0)\|_{L^2\left(B_{r_0}\right)}\leq \varepsilon.
\end{equation}
\end{prop}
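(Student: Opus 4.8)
The strategy is to split $v_k=v^{(1)}_k+v^{(2)}_k$ as in \eqref{3-80CR} and to notice that the ``positive'' tail $v^{(2)}_k$ is handled \emph{verbatim} as in Proposition \ref{2-81prop}, so that the only genuinely new point is the finite ``non-positive'' part $v^{(1)}_k$. For $v^{(2)}_k$: since $\lambda_j>0$ for $j\geq m+1$, the series \eqref{v-2} is \eqref{3-80} with the summation index shifted from $1$ to $m+1$, and $|\widehat{\varphi}_{\overline{\mu},k}(\sqrt{\lambda_j})|\leq\|\varphi_{\overline{\mu},k}\|_{L^1(\mathbb{R})}=1$. Hence, repeating word for word the proof of Proposition \ref{2-81prop} (using $\sum_j(1+\lambda_j^2)\alpha_j^2\leq CH^2$ from \eqref{1-72CR}, the properties \eqref{2-80} of $g_k$, and the elliptic $L^2$ regularity estimate \cite{GT}), one gets $v^{(2)}_k(\cdot,y)\in H^2(B_2)\cap H^1_0(B_2)$ with $\partial_y v^{(2)}_k,\partial^2_y v^{(2)}_k,\partial_y\nabla_x v^{(2)}_k\in L^2(B_2)$ and $\sum_{j=0}^{2}\|\partial^{j}_y v^{(2)}_{k}(\cdot,y)\|_{H^{2-j}(B_2)}\leq CHe^{2k}$ for every $y$; a direct computation from \eqref{1-71CR} (absorbing the $c\,v_k$ term and the factor $q$ through the eigenvalue identity $\mbox{div}(A\nabla_x e_j)+ce_j=-\lambda_j q e_j$) shows that $v^{(2)}_k$ solves the first equation in \eqref{4-5-6-81CR} with $f_k$ given by \eqref{1-81CR}; and, using $g''_k(y\sqrt{\lambda_j})-g_k(y\sqrt{\lambda_j})=0$ for $|y|\sqrt{\lambda_j}\leq k$ together with \eqref{3'-75}, \eqref{fourierthetak} and the chain \eqref{90}, one obtains \eqref{2-81CR}.

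The part $v^{(1)}_k$ is a \emph{finite} sum, hence trivially in $H^2(B_2)\cap H^1_0(B_2)$ and smooth in $y$; for $1\leq j\leq m$ one has $\lambda_j\leq 0$, the branch $\sqrt{\lambda_j}=i\sqrt{|\lambda_j|}$ occurs, and $y\mapsto\cos(\sqrt{|\lambda_j|}\,y)$ is the exact solution of $\partial^2_y w=\lambda_j w$, so each of its terms is an exact solution of the corresponding elliptic equation and contributes \emph{nothing} to $f_k$ — this is why the sum in \eqref{1-81CR} runs only over $j\geq m+1$. The only new estimate needed is the control of the Boman multiplier at the imaginary argument $i\sqrt{|\lambda_j|}$: since $\varphi_{\overline{\mu},k}$ is supported in $\big[-\tfrac{\lambda(\overline{\mu}+1)}{4},\tfrac{\lambda(\overline{\mu}+1)}{4}\big]\subset[-\tfrac{\lambda}{2},\tfrac{\lambda}{2}]$ and $\|\varphi_{\overline{\mu},k}\|_{L^1(\mathbb{R})}=1$, its Fourier transform is entire of exponential type at most $\lambda/2$ and
\begin{equation*}
\big|\widehat{\varphi}_{\overline{\mu},k}(i\sqrt{|\lambda_j|})\big|=\Big|\int_{\mathbb{R}}\varphi_{\overline{\mu},k}(t)\,e^{\sqrt{|\lambda_j|}\,t}\,dt\Big|\leq e^{(\lambda/2)\sqrt{|\lambda_1|}},
\end{equation*}
the last step because $|\lambda_j|\leq|\lambda_1|$ for $j\leq m$. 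Combining this with $|\partial^{\ell}_y\cos(\sqrt{|\lambda_j|}\,y)|\leq|\lambda_j|^{\ell/2}$ ($\ell=0,1,2$, uniformly in $y$), the elliptic $L^2$ regularity estimate \cite{GT} to pass from $\|\mbox{div}(A\nabla_x e_j)\|_{L^2}$ to $\|D^2_x e_j\|_{L^2}$, and the \emph{enhanced} summability $\sum_j(1+|\lambda_j|+\lambda_j^2)\alpha_j^2\leq CH^2$ of \eqref{1-72CR}, one gets $\sum_{j=0}^{2}\|\partial^{j}_y v^{(1)}_{k}(\cdot,y)\|_{H^{2-j}(B_2)}\leq Ce^{\lambda\sqrt{|\lambda_1|}}H$ for all $y$.

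Adding the two pieces gives $v_k\in H^2(B_2)\cap H^1_0(B_2)$, even in $y$ (each $\cos$ and $g_k$ is even), solving \eqref{4-5-6-81CR} with $f_k$ as in \eqref{1-81CR}; summing the two displayed bounds and majorizing crudely by the product $e^{\lambda\sqrt{|\lambda_1|}}He^{2k}$ gives \eqref{3-81CR}, while \eqref{2-81CR} was proved above. Finally, since $g_k(0)=1$, $\cos 0=1$, and (by evenness of $\varphi_{\overline{\mu},k}$) $\widehat{\varphi}_{\overline{\mu},k}(i\sqrt{|\lambda_j|})=\int_{\mathbb{R}}\varphi_{\overline{\mu},k}(t)\cos(\sqrt{\lambda_j}t)\,dt$, we get $v_k(x,0)=\sum_j\alpha_j\big(\int_{\mathbb{R}}\varphi_{\overline{\mu},k}(t)\cos(\sqrt{\lambda_j}t)\,dt\big)e_j(x)=\int_{\mathbb{R}}\widetilde{u}(x,t)\varphi_{\overline{\mu},k}(t)\,dt=\widetilde{u}_k(x)$, and \eqref{1-82CR} then follows from $\|\varphi_{\overline{\mu},k}\|_{L^1(\mathbb{R})}=1$, Schwarz's inequality and the definition \eqref{4ii-65} of $\varepsilon$, exactly as in Proposition \ref{2-81prop}. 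The main (and essentially only) obstacle is the bookkeeping around the imaginary point $i\sqrt{|\lambda_j|}$: one must check that the entire (Paley--Wiener) extension of the Boman multiplier together with the extra $1+|\lambda_j|$ in \eqref{1-72CR} suffice to keep the finitely many non-positive modes under control uniformly in $y$; everything else is a rerun of the proof of Proposition \ref{2-81prop}.
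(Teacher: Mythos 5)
Your proposal is correct and is exactly the adaptation the paper intends (the paper omits the proof of this proposition, presenting it as an easy modification of Proposition \ref{2-81prop}): you rerun the argument for the modes $j\geq m+1$, and for the finitely many non-positive modes you supply precisely the two new ingredients needed, namely the Paley--Wiener-type bound $\left\vert\widehat{\varphi}_{\overline{\mu},k}\left(i\sqrt{|\lambda_j|}\right)\right\vert\leq e^{(\lambda/2)\sqrt{|\lambda_1|}}$ coming from $\mbox{supp}\,\varphi_{\overline{\mu},k}\subset[-\lambda/2,\lambda/2]$, $\varphi_{\overline{\mu},k}\geq 0$, $\int\varphi_{\overline{\mu},k}=1$, and the extra $|\lambda_j|$ term in \eqref{1-72CR}, together with the observation that $\cos\left(\sqrt{|\lambda_j|}\,y\right)e_j(x)$ solves the homogeneous elliptic equation so that $f_k$ only involves $j\geq m+1$. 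One small caveat: your computation actually verifies the equation with the zeroth-order term included, i.e. $q\partial^2_y v_k+\mbox{div}\left(A\nabla_x v_k\right)+c\,v_k=q\cdot(\hbox{sum in \eqref{1-81CR}})$, so the literal display \eqref{4-5-6-81CR} (which drops $c\,v_k$, and the factor $q$) should be read as a typo of the paper rather than silently ``absorbed''; with that reading your argument is complete.
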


Instead of Proposition \ref{3-91} we have

\begin{prop}\label{3-91propCR}
Let $v_{k}$ be defined in \eqref{3-80CR}. Then there exists a constant $c$, $0<c<1$, depending on $\lambda$ only such that if $r_0\leq c$, we have
\begin{equation}
\label{1-91}
\|v_{k}\|_{L^2\left(\widetilde{B}_{r_0/4}\right)}\leq Ce^{\lambda\sqrt{|\lambda_1|}}\left(\varepsilon+H\left(C_0r_0\right)^{2k}\right)^{\beta}\left(He^{2k}+H\left(C_0r_0\right)^{2k}\right)^{1-\beta}.
\end{equation}
where $\beta\in (0,1)$, $C$ depend on $\lambda$ and $\Lambda$ only and $C_0=4\pi e \lambda^{-1}$.
\end{prop}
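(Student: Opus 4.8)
The plan is to follow, almost verbatim, the proof of Proposition~\ref{3-91}; the only genuinely new point is to keep track of the factor $e^{\lambda\sqrt{|\lambda_1|}}$ created by the finitely many nonpositive eigenvalues $\lambda_1\le\cdots\le\lambda_m\le0$, and to verify that this factor is confined to the coarse $L^2$-bound on $\widetilde B_{r_0}$ and never enters the bound on the trace at $\{y=0\}$.

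First I would introduce, exactly as in \eqref{1-92}, the auxiliary function $w_k\in H^2(\widetilde B_{r_0})$ solving $q(x)\partial_y^2 w_k+\mbox{div}(A(x)\nabla_x w_k)=f_k$ in $\widetilde B_{r_0}$ with $w_k=0$ on $\partial\widetilde B_{r_0}$, where $f_k$ is the source appearing in \eqref{1-81CR}. Since $f_k$ is even in $y$, uniqueness for this Dirichlet problem forces $w_k$ to be even in $y$ too. Standard $L^2$ regularity together with the trace inequality give $\|w_k\|_{L^2(\widetilde B_{r_0})}+r_0\|\nabla_{x,y}w_k\|_{L^2(\widetilde B_{r_0})}\le C\|f_k\|_{L^2(\widetilde B_{r_0})}$ and $\|w_k(\cdot,0)\|_{L^2(B_{r_0/2})}\le Cr_0^{3/2}\|f_k\|_{L^2(\widetilde B_{r_0})}$. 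The decisive observation here is that the sum defining $f_k$ in \eqref{1-81CR} runs only over $j\ge m+1$, so that the bound \eqref{2-81CR} (which coincides with \eqref{2-81}) applies and yields, for $r_0$ small enough, $\|f_k\|_{L^2(\widetilde B_{r_0})}\le CH(C_0r_0)^{2k}$ with no exponential factor in sight.

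Next I would set $z_k:=v_k-w_k$. By \eqref{4-5-6-81CR} and the equation for $w_k$ the function $z_k$ satisfies $q\partial_y^2 z_k+\mbox{div}(A\nabla_x z_k)=0$ in $\widetilde B_{r_0}$, and, $v_k$ and $w_k$ being even in $y$, we have $\partial_y z_k(\cdot,0)=0$ in $B_{r_0}$; hence Theorem~\ref{stimacauchy} applies to $z_k$. To feed that interpolation inequality I would estimate the trace by $\|z_k(\cdot,0)\|_{L^2(B_{r_0/2})}\le\|v_k(\cdot,0)\|_{L^2(B_{r_0})}+\|w_k(\cdot,0)\|_{L^2(B_{r_0/2})}\le\varepsilon+CH(C_0r_0)^{2k}$, where \eqref{1-82CR} is exactly what removes any exponential factor, and the volume term by $\|z_k\|_{L^2(\widetilde B_{r_0})}\le Ce^{\lambda\sqrt{|\lambda_1|}}H\big(e^{2k}+(C_0r_0)^{2k}\big)$, obtained by integrating \eqref{3-81CR} over $y\in(-r_0,r_0)$ and adding the $w_k$-bound of the previous paragraph. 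Plugging these two estimates into Theorem~\ref{stimacauchy}, using $e^{\lambda\sqrt{|\lambda_1|}(1-\beta)}\le e^{\lambda\sqrt{|\lambda_1|}}$, and finally returning to $v_k$ via $\|v_k\|_{L^2(\widetilde B_{r_0/4})}\le\|z_k\|_{L^2(\widetilde B_{r_0/4})}+\|w_k\|_{L^2(\widetilde B_{r_0/4})}$ — the last term being absorbed into the right-hand side of \eqref{1-91} — gives the claimed inequality.

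The one spot deserving real care, though I would not call it a serious obstacle, is the bookkeeping of the two eigenvalue ranges in \eqref{3-80CR}. The coefficients $\widehat\varphi_{\overline\mu,k}(i\sqrt{|\lambda_j|})$ of the low-mode piece $v_k^{(1)}$ can only be controlled by $e^{\lambda\sqrt{|\lambda_1|}/2}\le e^{\lambda\sqrt{|\lambda_1|}}$ (since $\varphi_{\overline\mu,k}$ is a nonnegative function of total mass one supported in an interval of half-width $\le\lambda/2$, its transform is entire of the corresponding exponential type), so this factor is genuinely unavoidable in $\|v_k(\cdot,y)\|_{L^2(B_2)}$, and therefore in $\|z_k\|_{L^2(\widetilde B_{r_0})}$. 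The argument nevertheless closes because the trace $v_k(\cdot,0)=\widetilde u_k$ is still controlled directly by $\varepsilon$ through \eqref{1-82CR}, and because the elliptic source $f_k$ contains no index $j\le m$; everything else — smallness of $r_0$ so that the minimum in \eqref{2-81CR} is realized by the power term and the balls sit inside $\widetilde B_2$, Caccioppoli, standard $L^2$ elliptic regularity, the even/odd symmetry in $y$ — is copied word for word from the proof of Proposition~\ref{3-91}.
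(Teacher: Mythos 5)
Your argument is correct and is exactly the intended one: the paper leaves this proposition unproved precisely because it follows the proof of Proposition \ref{3-91} verbatim, with \eqref{2-81CR}, \eqref{1-82CR} and \eqref{3-81CR} replacing \eqref{2-81}, \eqref{1-82} and \eqref{3-81}, and you correctly track that the factor $e^{\lambda\sqrt{|\lambda_1|}}$ enters only through the bulk bound on $v_k$ (via the low modes $v_k^{(1)}$, which are source-free and whose coefficients $\widehat{\varphi}_{\overline{\mu},k}(i\sqrt{|\lambda_j|})$ have the stated exponential size) and not through $f_k$ or the trace at $y=0$.
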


With propositions \ref{pag71CR}, \ref{2-81propCR}, \ref{3-91propCR} at hand and by using Carleman estimate \eqref{6-118}, the proofs of estimates \eqref{SUCP} and \eqref{SUCPBoundary} are straightforward, whenever  \eqref{condition-b} is satisfied.

\bigskip

In the more general case we use a well known trick, see for instance \cite{La-O}, to transform the equation \eqref{4i-65CR} in a self-adjoint equation. Let $z$ be a new variable and denote by $A_{0}(x,z)=\left\{a_{0}^{ij}(x,z)\right\}^{(n+1)}_{i,j=1}$ the real-valued symmetric $(n+1)\times (n+1)$ matrix whose entries are defined as follows. Let $\eta\in C^{1}(\mathbb{R})$ be a function such that $\eta(z)=z$, for $z\in (-1,1)$, and $|\eta(z)|+|\eta'(z)|\leq 2\lambda^{-1}$

\[
a_{0}^{ij}(x,z)=a_{0}^{ij}(x),\quad\hbox{ if }  i,j\in\{1,\ldots,n\},
\]
\[
a_{0}^{(n+1)j}(x,z)= a_{0}^{j(n+1)}(x,z)=\eta(z)b^j(x),\quad\hbox{ if }  1\leq j\leq n,
\]
\[a_{0}^{(n+1)(n+1)}(x,z)=K_0\]
where $K_0=8\lambda^{-3}+1$.
We have that $A_0$ satisfies
\[\lambda_0|\zeta|^2\leq A_0(x,z)\zeta\cdot\zeta\leq \lambda_0^{-1}|\zeta|^2 ,\quad\hbox{ for every }\zeta\in\mathbb{R}^{n+1}\]
and
\[\left\vert A_0(x,z)-A_0(y,w)\right\vert\leq\Lambda_0 \left(\left\vert x-y \right\vert+|z-w|\right), \quad \hbox{for every } (x,z), (y,w)\in\mathbb{R}^{n+1}\]
where $\lambda_0$ depends on $\lambda$ only and $\Lambda_0$ depends on $\lambda, \Lambda$ only.
Denote
\[\mathcal{L}U:=\mbox{div}_{x,z}\left(A_0(x,z)\nabla_{x,z} U\right)+c(x)U\]

It is easy to check that if $u(x,t)$ is a solution of \eqref{4i-65CR} ($\rho_0=1$) then $U(x,z,t):=u(x,t)$ is solution to
\begin{equation*} \label{4i-65CR}
q(x)\partial^2_{t}U-\mathcal{L}U=0, \quad \hbox{in } \widetilde{B}_{1}\times(-\lambda,\lambda).
\end{equation*}

Therefore we are reduced to the case considered previously in this subsection and again the proofs of estimates \eqref{SUCP} and \eqref{SUCPBoundary} are now straightforward.

\end{document}